\documentclass[11pt,reqno]{amsart}

\pdfoutput=1

\usepackage[letterpaper,margin=1in]{geometry}
\usepackage[backend=bibtex8,style=numeric,isbn=false,url=false]{biblatex}

\bibliography{12th-references}

\newtheorem{thm}{Theorem}

\newtheorem{lem}{Lemma}
\newtheorem{defn}{Definition}
\newtheorem{rmk}{Remark}
\newtheorem{prop}{Proposition}

\newcommand{\Mod}[1]{\ (\mathrm{mod}\ #1)}
\newcommand{\sumprime}{\mathop{\sideset{}{^*}\sum}}

\newcommand{\R}{\mathbb{R}}

\newcommand{\Z}{\mathbb{Z}}

\newcommand{\Real}{\, \mathrm{Re} \,}
\newcommand{\dx}{\, \text{d}}
\newcommand{\rt}{\text{rt}}
\newcommand\ord{\mathop{\mathrm{ord}}\nolimits}

\usepackage{changepage}
\usepackage{amssymb}
\usepackage{graphicx}

\usepackage{xcolor}
\definecolor{couleur_cite}{rgb}{0.05,.4,0.05}
\definecolor{couleur_link}{rgb}{0.05,0.05,0.4}
\definecolor{couleur_url}{rgb}{0.5,0,0}
\usepackage[hyperfootnotes=false]{hyperref}
\hypersetup{hypertexnames=false,colorlinks=true,citecolor=couleur_cite,linkcolor=couleur_link,urlcolor=couleur_url,
pdfstartview=FitH, pdfauthor=Djordje Milicevic and Daniel White, pdftitle=Twelfth moment of Dirichlet L-functions to prime power moduli}

\begin{document}

\title[Twelfth moment of $L$-functions to prime power moduli]{Twelfth moment of Dirichlet $L$-functions\\ to prime power moduli}
\author{Djordje Mili\'cevi\'c}
\address{Bryn Mawr College, Department of Mathematics, 101 North Merion Avenue, Bryn Mawr, PA 19010, USA}
\curraddr{Max-Planck-Institut f\"ur Mathematik, Vivatsgasse 7, D-53111 Bonn, Germany}
\email{dmilicevic@brynmawr.edu}
\author{Daniel White}
\address{Bryn Mawr College, Department of Mathematics, 101 North Merion Avenue, Bryn Mawr, PA 19010, USA}
\email{dfwhite@brynmawr.edu}
\thanks{D.M. was supported by the National Science Foundation, Grants DMS-1503629 and DMS-1903301.}
\begin{abstract}
We prove the $q$-aspect analogue of Heath-Brown's result on the twelfth power moment of the Riemann zeta function for Dirichlet $L$-functions to odd prime power moduli. Our results rely on the $p$-adic method of stationary phase for sums of products and complement Nunes'  bound for smooth square-free moduli.
\end{abstract}

\subjclass[2010]{Primary 11M06, 11L07; Secondary 11L40, 26E30}
\keywords{$L$-functions, moments, $p$-adic analysis, depth aspect, exponential sums, method of stationary phase}

\maketitle

\section{Introduction}
Analytic behavior of $L$-functions inside the critical strip encodes essential arithmetic information, and statistical information about their zeros, moments, and rate of growth along the critical line is of central importance in analytic number theory.
The classical Weyl bound shows that the Riemann zeta function satisfies
\begin{equation}
\label{zeta_weyl}
    \zeta \left( \tfrac{1}{2} + it \right) \ll_{\varepsilon} (1+|t|)^{1/6 + \varepsilon}
\end{equation}
where $\varepsilon > 0$ is an arbitrarily small constant that may change from one instance to another throughout this article. The widely believed Lindel\"of hypothesis asserts that $\frac16$ can be removed from the exponent above. The most recent progress in this direction is due to Bourgain \cite{Bourgain2017}, reducing the exponent to $\frac{13}{84}+ \varepsilon$. One avenue to understanding the behavior of the Riemann zeta function along the critical line is through power moments, for which asymptotic formulas are only available up to the fourth moment~\cite{Ingham1928,Motohashi1997}. Higher moments provide tighter control on large values, and in this direction Heath-Brown \cite{Heath-Brown1978} proved that, for $T\geqslant 1$,
\begin{equation}
\label{HB12}
    \int_T^{2T} \left|  \zeta \left( \tfrac{1}{2} + it \right) \right|^{12} \dx t \ll_{\varepsilon} T^{2 + \varepsilon}.
\end{equation}
This is a very elegant bound as it recovers \eqref{zeta_weyl} as a rather immediate consequence. However, \eqref{HB12} is quite a bit stronger in that it immediately implies that $\zeta \left(\frac12 + it \right)$ \emph{cannot sustain} large values; namely that
\begin{equation}
\label{HBR}
    \big|  \left\{ t \in [T,2T] : \big|\zeta\big( \tfrac{1}{2} + it\big)\big| > V  \right\}
    \big| \ll_{\varepsilon} T^{2 + \varepsilon} V^{-12}.
\end{equation}
Actually, \eqref{HB12} and \eqref{HBR} are equivalent, as is easily established via integration by parts.

Questions regarding the asymptotic behavior of $\zeta(\frac12+it)$ as $t \to \infty$ have $q$-aspect analogues concerning the central values of Dirichlet $L$-functions
$L(\frac12,\chi)$, where $\chi$ is a primitive character modulo $q$ and $q\to\infty$. For an account of some of the current literature on $L(\frac12,\chi)$ and $L$-functions in the $t$-aspect, we direct the reader to the introduction of \cite{Nunes2019}. The $q$-analogue of \eqref{zeta_weyl}, the bound $L(\frac12,\chi)\ll_{\varepsilon} q^{1/6+\varepsilon}$, long out of reach for generic $q$ except for real characters to odd square-free moduli~\cite{ConreyIwaniec2000}, has been recently announced by Petrow--Young~\cite{PetrowYoung2018}. For certain families of Dirichlet $L$-functions, however, even small improvements are known on $q^{1/6}$; see \cite{Milicevic2016} for a ``sub-Weyl'' bound $L(\frac12,\chi)\ll q^{1/6-\delta}$ for prime power moduli and \cite{Irving2016} and \cite{WuXi2016} for smooth square-free moduli.

While Dirichlet $L$-functions $L(\sigma+it,\chi)$ are also fruitfully used with a fixed modulus $q$ and large $|t|$ to study arithmetic phenomena modulo $q$, from an adelic point of view it is  more natural to consider the dependence on a large conductor $q$ as a measurement of increasing ramification, this time at finite places, and in particular, as a pure parallel to the $t$-aspect, at a fixed finite place. This explains why many tools of classical ``archimedean'' analytic number theory have found natural $p$-adic analogues. The extent of this parallel is yet to be fully understood, and our aim is to explore its manifestation for high moments of $L$-functions. Our main theorem is a $q$-aspect analogue of \eqref{HB12} for Dirichlet $L$-functions to odd prime power moduli.
\begin{thm}
\label{main}
There exists a constant $A>0$ such that, for every odd prime $p$ and every $q=p^n$,
\[ \sum_{\chi \Mod{q}} \left|L\left(\tfrac{1}{2},\chi\right)\right|^{12} \ll_{\varepsilon} p^A q^{2 + \varepsilon}. \]
\end{thm}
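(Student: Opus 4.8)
The plan is to expand $|L(\tfrac12,\chi)|^{12}$ via a self-dual approximate functional equation, collapse the $\chi$-sum by the orthogonality of primitive characters modulo $p^n$, and estimate the resulting ``sum of products'' exponential sums modulo $p^n$ by the $p$-adic method of stationary phase. Since $\epsilon(\chi)\epsilon(\bar\chi)=1$ for primitive $\chi$, the product $L(s,\chi)^6L(s,\bar\chi)^6$ is an $L$-function of conductor $q^{12}$, self-dual with trivial sign in its functional equation $s\mapsto 1-s$; its approximate functional equation gives, for an explicit smooth $V$ with $V(0)=1$ of rapid decay (depending only on the parity of $\chi$),
\[
  \big|L(\tfrac12,\chi)\big|^{12}=2\sum_{c,d\geqslant 1}\frac{d_6(c)\,d_6(d)}{\sqrt{cd}}\,\chi(c)\bar\chi(d)\,V\!\left(\frac{cd}{q^6}\right),
\]
$d_6$ denoting the sixfold divisor function. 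Using, for $n\geqslant 2$ and $(u,p)=1$, that $\sumprime_{\chi\Mod{p^n}}\chi(u)=\tfrac{p-1}{p}\sum_{r\bmod p^n,\,p\nmid r}e(r(u-1)/p^n)$ with $e(x)=e^{2\pi i x}$, applied with $u=c\bar d$ and followed by the substitution $r\mapsto r\bar d$, one obtains
\[
  \sumprime_{\chi\Mod{q}}\big|L(\tfrac12,\chi)\big|^{12}=\frac{2(p-1)}{p}\sum_{\substack{r\bmod p^n\\ p\nmid r}}\ \sum_{\substack{c,d\geqslant 1\\ (cd,p)=1}}\frac{d_6(c)\,d_6(d)}{\sqrt{cd}}\,e\!\left(\frac{r(c-d)}{p^n}\right)V\!\left(\frac{cd}{q^6}\right).
\]
The diagonal $c=d$ contributes $\tfrac{2(p-1)}{p}\varphi(p^n)\sum_{(c,p)=1}d_6(c)^2c^{-1}V(c^2/q^6)\ll q^{1+\varepsilon}$, which is acceptable; the case $n=1$ yields instead the hyper-Kloosterman sum $\mathrm{Kl}_6(\,\cdot\,;p)$, to which Deligne's bound applies, and imprimitive characters reduce to smaller powers of $p$. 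So everything reduces to the off-diagonal $c\neq d$ for $n\geqslant 2$.

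Separating the $c$- and $d$-variables by a Mellin transform of $(cd)^{-1/2}V(cd/q^6)$, the off-diagonal becomes $\tfrac{2(p-1)}{p}\cdot\tfrac1{2\pi i}\int\widehat V(s)q^{6s}\sum_{p\nmid r}\mathcal S(r,s)\,\mathcal S(-r,s)\,ds$, up to the subtracted diagonal, where
\[
  \mathcal S(r,s):=\sum_{(c,p)=1}\frac{d_6(c)}{c^{1/2+s}}\,e\!\left(\frac{rc}{p^n}\right)U\!\left(\frac{c}{q^3}\right)
\]
for a smooth compactly supported $U$, with $\mathcal S(-r,s)=\overline{\mathcal S(r,s)}$ on $\Real s=0$. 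Since there are only $\varphi(p^n)\ll q$ admissible $r$, the theorem follows once one proves $\mathcal S(r,s)\ll p^{A/2}q^{1/2+\varepsilon}$ uniformly in $r$ with $p\nmid r$ and with rapid decay in $\Im s$: then $\sum_{p\nmid r}|\mathcal S(r,s)|^2\ll p^{A}q^{2+\varepsilon}$ and the $s$-integral converges. Equivalently one must show $\sum_{c\leqslant Y}d_6(c)e(rc/p^n)\ll p^{A/2}Y^{1/2}q^{1/2+\varepsilon}$ for all $Y\ll q^3$, a genuine saving of a power of $q$ over the trivial bound throughout the range $q<Y\lesssim q^3$, where the trivial bound is useless; the estimate is sharp, its extremal configuration having one of the six factors of $c$ of size $\asymp q$.

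This exponential-sum bound is the \emph{crux} and, I expect, the main obstacle. Writing $d_6=\mathbf 1\ast\cdots\ast\mathbf 1$ turns $\mathcal S(r,s)$ into a sum of products $\sum_{n_1,\dots,n_6}(n_1\cdots n_6)^{-1/2-s}\,e(rn_1\cdots n_6/p^n)\,U(n_1\cdots n_6/q^3)$. Localizing $n_i\sim N_i$ with $\prod_iN_i\asymp q^3$, a Poisson summation in a single variable shows that dyadic configurations with $\max_iN_i\geqslant q^{1+\varepsilon}$ are negligible, since $p\nmid r\prod_jn_j$; hence all $N_i\leqslant q^{1+\varepsilon}$. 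For the surviving ranges one groups the variables into two blocks, completes the shorter one (of product-length $\asymp p^n$) modulo $p^n$, and is left with a complete multilinear sum $\sum_{\mathbf x\bmod p^n}e\big((r\mu\,x_1\cdots x_k+\ell_1x_1+\cdots+\ell_kx_k)/p^n\big)$ together with the dual sum over the $\ell_i$. The inner sum is evaluated by $p$-adic stationary phase: its critical points solve $r\mu\prod_{j\neq i}x_j+\ell_i\equiv 0\Mod{p^n}$, which has solutions only on a thin set of $(\ell_i)$, and where they exist the Hessian is $r\mu$ times a matrix of determinant $\pm(k-1)(x_1\cdots x_k)^{k-2}$ — a $p$-adic unit for $p\geqslant 13$, of bounded $p$-valuation for the remaining odd $p$; so the stationary-phase evaluation is essentially Morse, yielding the square-root bound $p^{nk/2}$ with at most a bounded loss in $p$ and lifting cleanly from modulus $p$ to $p^n$ by Hensel's lemma, while the degenerate loci (the $x_i$ or $\ell_i$ divisible by $p$) contribute subordinate terms controlled by a stratification that costs only a further bounded power of $p$. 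Reassembling these evaluations over the $O((\log q)^6)$ dyadic configurations and over the dual variables — so that the square-root gain exactly compensates for the length of the completion — yields $\mathcal S(r,s)\ll p^{A/2}q^{1/2+\varepsilon}$, and with it the theorem. Carrying out this multivariable $p$-adic stationary phase uniformly in $n$, with all of its bookkeeping and its control of the degenerate strata, is precisely the $p$-adic counterpart of the role that Deligne's estimates for hyper-Kloosterman sums to squarefree moduli play in Nunes' treatment of smooth squarefree $q$.
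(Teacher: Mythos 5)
Your plan reverses the logical structure of the paper and, more importantly, lands on a problem that is \emph{strictly harder} than the one it is supposed to solve. The paper never opens $|L(\tfrac12,\chi)|^{12}$ directly. Instead it proves Theorem~\ref{R bound}, i.e.\ $|R(V;q)|\ll_\varepsilon p^A q^{2+\varepsilon}V^{-12}$, by Heath-Brown's large-values device: one studies the \emph{short second moment} $S_2(\chi)=\sum_{\psi_1\bmod q_1}|L(\tfrac12,\chi\psi_1)|^2$ (Dirichlet polynomials of length only $q^{1/2}$, not $q^6$), establishes Proposition~\ref{second moment bound} via Poisson summation and evaluation of the complete sum $K_\chi(m;\tilde q)$ by $p$-adic stationary phase, and then \emph{averages} $S_2(\chi\psi_2)$ over an arbitrary collection $\Psi$ using Cauchy--Schwarz and a large-sieve-type inequality built on the near-orthogonality of the vectors $(K^{\pm}_{\chi\psi}(m;\tilde q))_m$ (Propositions~\ref{bound for completion} and \ref{short moment bound}); Theorem~\ref{main} is then a one-line deduction from Theorem~\ref{R bound} by partial summation. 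This indirection is the whole point: it keeps every exponential sum on a length comparable to the conductor and avoids the $d_6$-correlations entirely.

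By contrast, your reduction produces the estimate $\mathcal S(r,s)\ll p^{A/2}q^{1/2+\varepsilon}$, i.e.\ $\sum_{c\leqslant Y}d_6(c)e(rc/p^n)\ll p^{A/2}Y^{1/2}q^{1/2+\varepsilon}$ uniformly in $(r,p)=1$ for $Y$ up to $q^3$. Unwound, this is equidistribution of $d_6$ in arithmetic progressions to modulus $p^n$ with essentially square-root-cancelling error terms over a range six conductors long; equivalently, your $\sum_{p\nmid r}\mathcal S(r,s)\mathcal S(-r,s)\ll q^{2+\varepsilon}$ is exactly the off-diagonal of the twelfth moment, whose main-term cancellation (between the $\varphi(q)\mathbf 1_{c\equiv d\,(q)}$ and $\varphi(q/p)\mathbf 1_{c\equiv d\,(q/p)}$ pieces of the primitive-orthogonality relation) rests on an asymptotic evaluation with power-saving error of shifted convolutions of $d_6$ of the form $\sum_{c}d_6(c)d_6(c+h)$, a problem well beyond current reach for $k\geqslant 3$ and certainly not touched by $p$-adic stationary phase. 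If the bound $\sum d_6(c)e(rc/p^n)$ you need were available, it alone would give a result substantially stronger than the twelfth moment; Heath-Brown's trick exists precisely because this direct route is not available.

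Two further concrete steps in your sketch do not hold up. First, the claim that dyadic blocks with $\max_iN_i\geqslant q^{1+\varepsilon}$ are ``negligible'' after a single Poisson/geometric-sum step is not right: for $N_1\gg q$ the geometric sum $\sum_{n_1\sim N_1}e(an_1/p^n)$ with $(a,p)=1$ is only $\ll q$, so the contribution is $\ll q\prod_{i\geqslant 2}N_i\asymp Y\cdot(q/N_1)$, a saving of only $q/N_1$, not a power of $q$ in general. Second, completing the ``short'' block of $k$ variables of product-length $\asymp q$ to modulus $p^n$ and invoking square-root cancellation for the resulting hyper-Kloosterman-type sum $\ll q^{k/2+\varepsilon}$ does not win for $k\geqslant 2$: Lemma~\ref{completion} applied coordinatewise gives a bound $\ll q^{-k}\cdot q^{k/2+\varepsilon}\cdot\prod_i\big(\sum_{v_i}\min(N_i,\|v_i/q\|^{-1})\big)\ll q^{k/2+\varepsilon}(\log q)^k$ against a trivial bound $\prod N_i\asymp q$, which is a loss once $k\geqslant 3$ and at best break-even at $k=2$. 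This is the conductor-drop obstruction that the paper's architecture is specifically designed to sidestep: in Proposition~\ref{bound for completion} the completion is done in a \emph{single} variable $m$ of length comparable to $Q$, so the square-root from the complete sum of products $K^{\pm}_\chi\overline{K^{\pm}_{\chi'}}$ genuinely saves. There is no analogous way to run your six-fold multilinear completion without a net loss.
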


We remark that Theorem~\ref{main} complements the result of Nunes \cite{Nunes2019} where $q$ is taken to be smooth and square-free. The structure of the proof of Theorem~\ref{main} and the main result of Nunes translate the approach taken by Heath-Brown \cite{Heath-Brown1978} into the context of factorable and prime power moduli. For a detailed comparison between Heath-Brown's and Nunes' work, we direct the reader to the introduction of \cite{Nunes2019}. Despite the similarities, the methods of evaluation and estimation of exponential sums found throughout are quite different in the present paper. In particular, we make extensive use of a method known as $p$-adic stationary phase, which we will encapsulate in Lemmata~\ref{stationary phase} and \ref{exact stationary phase}.

As in \cite{Heath-Brown1978,Nunes2019}, the moment estimate in Theorem~\ref{main} is a consequence of the following statement, which is reminiscent of \eqref{HBR} and its relationship to \eqref{HB12}. We will establish the following.
\begin{thm}
\label{R bound}
For $V>0$, define
\[    R(V;q) := \left\{ \chi \text{ primitive of modulus } q : \left| L\left(\tfrac{1}{2},\chi\right) \right| > V \right\}. \]
Then there exists a constant $A>0$ such that, for every odd prime $p$ and every $q=p^n$,
 \[ |R(V;q)| \ll_{\varepsilon} p^Aq^{2+ \varepsilon} V^{-12}. \]
\end{thm}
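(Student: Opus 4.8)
The plan is to prove Theorem~\ref{R bound} by the density (large values) method, transplanting the scheme of Heath-Brown and of Nunes to the $p$-adic setting, with the archimedean exponential sum estimates replaced throughout by the $p$-adic stationary phase of Lemmata~\ref{stationary phase} and~\ref{exact stationary phase}. First I would invoke the approximate functional equation, which for $\chi$ primitive modulo $q$ expresses $L(\tfrac12,\chi)$ as a smoothly weighted Dirichlet sum $\sum_n\chi(n)n^{-1/2}W(n/\sqrt q)$ together with its dual, each of essential length $\sqrt q$. A smooth dyadic partition of unity then shows that $\chi\in R(V;q)$ forces $|A_M(\chi)|\gg V(\log q)^{-1}$ for some dyadic $M\leqslant q^{1/2+\varepsilon}$, where $A_M(\chi)=M^{-1/2}\sum_n\chi(n)w(n/M)$ for a fixed smooth bump $w$; since $|A_M(\chi)|\leqslant M^{1/2}$, only the scales $M\gg V^2(\log q)^{-2}$ occur. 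Pigeonholing over the $O(\log q)$ admissible scales (and over $\chi\leftrightarrow\bar\chi$), it suffices to bound, for each fixed $M$, the size of a subfamily $\mathcal R_M\subseteq R(V;q)$ on which $|A_M(\chi)|\gg V(\log q)^{-1}$.

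Raising to the cube, $A_M(\chi)^3=\sum_m\beta_m\chi(m)$ with $\beta_m$ supported on $m\asymp M^3$, $|\beta_m|\ll m^{-1/2}d_3(m)$ and $\sum_m|\beta_m|^2\ll(\log q)^{O(1)}$, so that $|\mathcal R_M|\,(V/\log q)^{6}\ll\sum_{\chi\in\mathcal R_M}|A_M(\chi)^3|^2$. If $M\leqslant q^{1/3}$ the Dirichlet polynomial $A_M(\chi)^3$ has length at most $q$; completing the sum to all $\chi\bmod q$ and invoking orthogonality of the characters (equivalently, the large sieve) gives $\sum_{\chi\bmod q}|A_M(\chi)^3|^2\ll q(\log q)^{O(1)}$, and since $M\leqslant q^{1/3}$ forces $V\ll q^{1/6}\log q$ this already yields $|\mathcal R_M|\,V^{12}\ll q^{2+\varepsilon}$; similarly, for $V\ll q^{1/12}$ the trivial bound $|\mathcal R_M|\leqslant\#\{\chi\bmod q\}\ll q$ is enough. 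The real difficulty is thus confined to the \emph{long range} $q^{1/3}<M\leqslant q^{1/2+\varepsilon}$: there, completing $\sum_{\chi}|A_M(\chi)^3|^2$ to all $\chi\bmod q$ converts it into $\phi(q)$ times a weighted count of pairs of triples with $n_1n_2n_3\equiv n_1'n_2'n_3'\Mod{q}$ and $(n_in_i',q)=1$, whose diagonal is harmless but whose off-diagonal, estimated by the trivial count of solutions, merely reproduces the large-sieve bound $\ll q^{3/2+\varepsilon}$ — a full power of $q$ away from what Theorem~\ref{R bound} requires.

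To recover this power of $q$ I would not complete the sum over characters but keep $\mathcal R_M$ intact, retain the bilinear structure of the off-diagonal, write the congruence $n_1n_2n_3\equiv n_1'n_2'n_3'\Mod{q}$ explicitly and detect it by additive characters to the modulus $q=p^n$, and — whenever a partial product exceeds $q$ — shorten the dual variables by the functional equation. This expresses the off-diagonal through complete exponential sums to the modulus $p^n$ of ``sum of products'' type, in several variables ranging over intervals of length about $M$ (or about $q/M$ after reflection), which is precisely the setting of the $p$-adic method of stationary phase: off the stationary locus the stationary phase expansion exhibits essentially square-root cancellation, and the contribution of the stationary locus is governed by explicit congruences of lower level whose solution counts can be controlled. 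Summing the resulting estimates against the smooth weights over the variable ranges and over the additive parameter should supply the missing saving and yield $|\mathcal R_M|\ll p^A q^{2+\varepsilon}V^{-12}$ uniformly throughout the long range; summing over the $O(\log q)$ dyadic scales $M$ and combining with the easy ranges then completes the proof.

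The step I expect to be the main obstacle is exactly this last one: extracting from the complete sums of products modulo $p^n$ cancellation that is strong enough, and \emph{uniform} over all admissible scales $M$ and over the additive parameter, to beat the trivial count of solutions of the multiplicative congruence by a full power of $q$. This calls for a careful implementation of the $p$-adic stationary phase expansion — in particular a separate analysis of its degenerate, non-transversal cases — together with explicit bookkeeping of the dependence on $p$ at every stage, so as to end with the clean polynomial factor $p^A$ of the statement.
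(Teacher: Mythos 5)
Your proposal diverges from the paper's argument at the decisive step, and the gap you yourself flag in the ``long range'' is genuine and is exactly where the paper's main construction lives. You raise the Dirichlet polynomial $A_M(\chi)$ to the cube and try to estimate the sixth-moment mean value $\sum_\chi|A_M(\chi)^3|^2$; for $M>q^{1/3}$ the Dirichlet polynomial $A_M(\chi)^3$ has length $M^3>q$, and the classical large sieve then saturates at $\ll M^3\ll q^{3/2+\varepsilon}$, a full power of $q$ short of what is needed. The proposed rescue --- open the congruence $n_1n_2n_3\equiv n_1'n_2'n_3'\pmod q$ with additive characters, reflect by the functional equation, and invoke $p$-adic stationary phase on ``sums of products'' --- is a slogan, not an argument. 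Cubing destroys the bilinear structure: the resulting exponential sums are in up to six variables, the stationary locus is a high-dimensional variety with no evident transversality, and Lemmata~\ref{stationary phase} and~\ref{exact stationary phase} as stated are one-variable tools. Nothing in the sketch explains why this program would save a power of $q$ uniformly in $M$ and in the additive parameter, and you acknowledge as much in your final paragraph.

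The paper does not cube. Following Heath-Brown's actual scheme (which averages $|\zeta(\tfrac12+it)|^2$ over short $t$-intervals around each large value, not $|\zeta|^6$ over the whole range), the central object is the \emph{short second moment} $S_2(\chi)=\sum_{\psi_1\bmod q_1}|L(\tfrac12,\chi\psi_1)|^2$, the $q$-aspect analogue of a short interval being a coset of $\widehat{(\Z/q_1\Z)^{\times}}$, i.e.\ a $p$-adic neighborhood of $\chi$ in the dual group. Opening the square, orthogonality over $\psi_1$ forces $n\equiv n'\pmod{q_1}$; writing $n'=n+hq_1$ and applying Poisson summation over residues modulo $Q_1=q/q_1$ produces the \emph{one-variable} complete exponential sum $K_\chi(j,h;Q_1)$ of \eqref{Kchidef}, with a $p$-adically analytic phase, precisely the setting of the stationary phase lemmata. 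This yields Proposition~\ref{second moment bound}. The large sieve then lives at the level of the trace functions: Proposition~\ref{bound for completion} shows that for two characters $\chi,\chi'$ the (still one-variable) complete sums of products $\sum_u K^{\pm}_\chi(u;\tilde q)\overline{K^{\pm}_{\chi'}(u;\tilde q)}e(-uv/Q)$ exhibit square-root cancellation, with the conductor $Q$ dropping with the ``distance'' $\delta_q(\chi,\chi')$; Cauchy--Schwarz on the $m$-sum then gives Proposition~\ref{short moment bound}, the aggregate bound $\sum_{\psi_2\in\Psi}S_2(\chi\psi_2)\ll q^{\varepsilon}\big((q_1+q_1^{1/4}q_2^{1/4})|\Psi|+q^{1/2}|\Psi|^{1/2}\big)$. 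Taking $q_1\approx V^2q^{-\varepsilon}$, $q_2=q_1^3$, applying this to $\Psi(V;\chi)$, and summing over cosets of $q_2$ gives $|R(V;q)|\ll q^{2+\varepsilon}V^{-12}$. You should abandon the sixth-moment skeleton and build instead around the short second moment $S_2(\chi)$: it is this step that converts the problem into one-variable complete sums to which the $p$-adic stationary phase method applies, and without it the gap in $q^{1/3}<M\leqslant q^{1/2+\varepsilon}$ persists.
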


Note that Theorem~\ref{main} follows immediately from Theorem~\ref{R bound} via summation by parts. From the available sharp estimates on the fourth moment of Dirichlet $L$-functions~\cite{Heath-Brown1981b,Soundararajan2007}, it follows that $|R(V;q)| \ll_{\varepsilon} q^{1 + \varepsilon} V^{-4}$; see section~\ref{mainproof_sec}. Combining this and the Weyl bound for this particular class of Dirichlet $L$-functions~\cite{Postnikov1955,Milicevic2016}, the range of interest in Theorem~\ref{R bound} is $q^{1/8-\varepsilon} \leqslant V \leqslant q^{1/6+\varepsilon}$.

The $p$-adic methods of this paper are very flexible. In particular, an analogue of \cite[Theorem 1.2]{Nunes2019}, which would sharpen Theorem~\ref{R bound} in the range $q^{3/20+\varepsilon}\leqslant V\leqslant q^{1/6-\varepsilon}$ (but not Theorem~\ref{main}) can likely be proved with a further application of the $p$-adic stationary phase method to complete exponential sums with substantially more involved phases than in \eqref{prop2-rough}. It would also be of interest to investigate whether the methods of the present paper and \cite{Nunes2019} can be unified to provide a twelfth moment bound for characters to moduli $q$ with finitely many well-located factors as in \cite{Heath-Brown1978a} (or a hybrid moment including the archimedean average); without imposing overly onerous factorization conditions, though, this may require delicate estimates on complete sums with degenerate critical points as in \cite[Lemma 7]{Heath-Brown1978a}.

\smallskip
\textbf{Overview:}
For the benefit of the reader, we present a conceptual overview of the proof, ignoring non-generic cases, coprimality conditions, $q^{\varepsilon}$ factors, and so on. We fix a divisor $q_1\mid q$,
and consider the short second moment
\begin{equation}
\label{shortmoment-def}
    S_2(\chi) := \sum_{\psi_1 \Mod{q_1}} \left| L\left(\tfrac{1}{2},\chi\psi_1\right) \right|^2.
\end{equation}
We will later choose roughly $q_1\sim V^2$, so that the expected sharp bound $S_2(\chi)\preccurlyeq q_1$ essentially matches the contribution of a single summand $|L(\frac12,\chi)|\sim V$.

Using the approximate functional equation and executing the $\psi_1$-average leads to weighted dyadic sums over $n\sim N \preccurlyeq q^{1/2}$ of terms of the form $\chi(n+hq_1)\overline{\chi}(n)$, which are $Q_1=(q/q_1)$-periodic. We apply Poisson summation, incurring the dual variable $j\preccurlyeq Q_1/N$ and the ``trace function'' $K_{\chi}(j,h;Q_1)$, which is shown in \eqref{Kchidef} and generically depends on $jh\preccurlyeq q/q_1^2$. The upshot of this analysis is Proposition~\ref{second moment bound}, which bounds $S_2(\chi)$ roughly by
\begin{equation}
\label{prop1-rough} q_1\bigg(1+Q_1^{-1/2}\sumprime_{|m|\preccurlyeq q/q_1^2}K_{\chi}(m;Q_1)A(m)\bigg),
\end{equation}
with somewhat messy arithmetic coefficients $A(m)\preccurlyeq 1$.

In Lemma~\ref{Kchi-lemma}, we show that the complete exponential sum $K_{\chi}(m;Q_1)$ exhibits square-root cancellation. This alone yields the upper bound $S_2(\chi)\preccurlyeq q_1+(q/q_1)^{1/2}$, which is sharp for $q_1\succcurlyeq q^{1/3}$ and recovers the Weyl subconvexity bound $L(\frac12,\chi)\preccurlyeq q^{1/6}$ (essentially by Weyl differencing followed by completion, as in \cite{Milicevic2016}).

For purposes of Theorems~\ref{main} and \ref{R bound}, we must consider values $q^{1/4}\preccurlyeq q_1\preccurlyeq q^{1/3}$, in which case the weighted sum of trace functions in \eqref{prop1-rough} is of length $Q_1^{1/2}\preccurlyeq q/q_1^2\preccurlyeq Q_1^{2/3}$. Weights $A(m)$ make it difficult to directly estimate the sum. Instead, the key idea is sort of a \emph{large sieve}: we argue that (roughly speaking, and as $q_1$ gets smaller) the vectors $(K_{\chi}(m;Q_1))_m$ are typically approximately orthogonal for different $\chi$, and thus it is hard for too many of them to avoid cancellation with a single vector $(A(m))_m$. The approximate orthogonality boils down to cancellations in \emph{incomplete sums of products}; since the length is over the square-root of the conductor, we apply the method of completion, incurring an additive twist. Proposition~\ref{bound for completion}, our key arithmetic input, shows square-root cancellation in sums of products of rough form
\begin{equation}
\label{prop2-rough}
\sumprime_{u\Mod{Q}}K_{\chi}^{\pm}(u;Q_1)\overline{K_{\chi'}^{\pm}(u;Q_1)}e(-uv/Q)\preccurlyeq Q^{1/2}.
\end{equation}
Here, the modulus $Q\mid Q_1$ drops with the conductor of $\chi\overline{\chi'}$ (essentially the distance between $\chi$ and $\chi'$ in the dual topology), and we must first separate $K_{\chi}$ into two oscillatory components $K_{\chi}^{\pm}$ (as often happens with Bessel functions; see also \cite[\S 9]{BlomerMilicevic2015}). Lemma~\ref{Kchi-lemma} and Proposition~\ref{bound for completion} form the heart of the paper and are proved by a consistent application of the \emph{$p$-adic method of stationary phase} to exponential sums with $p$-adically analytic phases, including characters to prime power moduli; see section~\ref{prelim-section}.

Proceeding with the large sieve idea, we estimate the the sum of $S_2(\chi\psi)$ in \eqref{prop1-rough} over an arbitrary set $\Psi$ of characters $\psi$ modulo some $q_2\mid q$ (with $q_1\mid q_2$) by applying the Cauchy--Schwarz inequality to the $m$-sum and bounding sums of products of $K_{\chi\psi}^{\pm}(m;Q_1)$ using \eqref{prop2-rough}. This shows in Proposition~\ref{short moment bound} that
\begin{equation}
\label{prop3-rough}
\sum_{\psi\in \Psi} S_2 (\chi \psi) \preccurlyeq \left(  \big(q_1 + q_1^{1/4}q_2^{1/4} \big) |\Psi| + q^{1/2} |\Psi|^{1/2} \right).
\end{equation}
The bound \eqref{prop3-rough} imposes a restriction on the size $|\Psi|$ as long as each $S_2(\chi\psi)$ is slightly bigger than $q_1+q_1^{1/4}q_2^{1/4}$. In section~\ref{mainproof_sec}, we first fix $\chi$ and choose $\Psi$ to be the set of characters modulo $q_2$ for which one of $|L(\frac12,\chi\psi\psi_1)|$ in \eqref{shortmoment-def} exceeds $V$, with $q_1 = q^{-\varepsilon} V^2$ and $q_2=q_1^3$, obtaining $|\Psi|\preccurlyeq qV^{-4}$. From here it is a matter of bookkeeping to Theorem~\ref{R bound} and hence Theorem~\ref{main}.

\smallskip
\textbf{Notation:} Throughout the paper, $\varepsilon>0$ indicates a fixed positive number, which may be different from line to line but may at any point be taken to be as small as desired. As usual, $f\ll g$ and $f=O(g)$ indicate that $|f|\leqslant Cg$ for some effective constant $C>0$, which may be different from line to line but does not depend on any parameters except as follows. In this introduction, all implied constants in $\ll$ and $O$ are absolute, except that they may depend on $\varepsilon>0$ if so indicated as in $\ll_{\varepsilon}$. In the rest of the paper, we allow the implied constants (but suppress this from notation) to depend on both the odd prime $p$ and $\varepsilon>0$. All dependencies on $p$ are easily seen to be polynomial, leading to the statements of Theorems~\ref{main} and \ref{R bound}; we do not make an effort to optimize the value of $A>0$. Finally, in the informal outline in the introduction only, we also use $f\preccurlyeq g$ to denote $|f|\ll_{p,\varepsilon}q^{\varepsilon}g$ and $f\sim g$ for $f\preccurlyeq g\preccurlyeq f$.

We denote the cardinality of a finite set $S$ by $|S|$; we use the same notation for the Lebesgue measure, with the meaning clear from the context. As is customary in analytic number theory, we also write $e(z)=e^{2\pi iz}$.

\textbf{Acknowledgements:} The authors would like to thank an anonymous referee for their careful reading and constructive suggestions, which helped us improve the paper in several places.

\section{Preliminaries}
\label{prelim-section}

\subsection{Approximate functional equation}
A ubiquitous tool in the analysis of $L$-functions inside the critical strip
is the approximate functional equation (see \cite[\S 5.2]{IwaniecKowalski2004}). This equation has various manifestations depending on context and purpose. A typical form of this equation in the context of bounding central values states that one may recover the size of $L(\frac12,\chi)$ by inserting $s = \frac12$ into the associated Dirichlet series which is essentially truncated at $q^{1/2}$ via a suitable smooth weight function. For our purposes, the following lemma is convenient, which follows by applying a dyadic partition of unity to \cite[Theorem~5.3]{IwaniecKowalski2004}.
\begin{lem}
\label{AFE2}
Let $\chi$ be a primitive Dirichlet character modulo $q$. Then,
\[
    \big|L\big(\tfrac{1}{2},\chi\big)\big|^2 \ll \log q \sum_{\substack{N \leqslant q^{1/2 + \varepsilon} \\ N \textnormal{ dyadic}}} \left| \frac{1}{\sqrt{N}} \sum_n \chi(n) V_N(n) \right|^2+ q^{-100},
\]
where $V_N$ is a smooth function depending only on $N$ and $q$, whose support is contained in $[N/2,2N]$ and whose derivatives satisfy $V_N^{(j)} \ll_j N^{-j}$ for every $j\in\mathbb{N}$.
\end{lem}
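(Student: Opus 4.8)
I would begin from the approximate functional equation in the form of \cite[Theorem~5.3]{IwaniecKowalski2004}, applied to $L(s,\chi)$ at $s=\tfrac12$ with the free parameter chosen to balance the two dual sums. Since $\chi$ is primitive modulo $q$, this yields
\[
    L\big(\tfrac12,\chi\big) = \sum_n \frac{\chi(n)}{\sqrt n}\,V\!\left(\frac{n}{\sqrt q}\right) + \epsilon(\chi)\sum_n \frac{\overline{\chi}(n)}{\sqrt n}\,V\!\left(\frac{n}{\sqrt q}\right),
\]
where $|\epsilon(\chi)|=1$ is the root number and $V=V_{1/2}$ is the standard smooth cutoff built from the archimedean factor: it is real-valued, smooth on $(0,\infty)$, satisfies $V(y)=1+O(y^{\delta})$ for some $\delta>0$ as $y\to 0^+$, and obeys $V^{(j)}(y)\ll_{j,A} y^{-j}(1+y)^{-A}$ for every $j$ and $A$. (The function $V$ depends on $q$ and, a priori, on the parity of $\chi$; since the parity takes only two values with uniform implied constants, this is cosmetic, and $V$---hence $V_N$ below---may be taken to depend only on $N$ and $q$ as asserted.) Because $V$ is real-valued, the second sum is the complex conjugate of the first, so $\big|L(\tfrac12,\chi)\big|\leqslant 2\big|\sum_n \chi(n) n^{-1/2} V(n/\sqrt q)\big|$, and it suffices to analyze this single Dirichlet polynomial.

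Next I would insert a fixed smooth dyadic partition of unity $1=\sum_{N\text{ dyadic}}\omega(n/N)$, valid for $n\geqslant 1$, with $\omega$ supported in $[\tfrac12,2]$, and set $V_N(x):=\sqrt{N/x}\;V(x/\sqrt q)\,\omega(x/N)$, which is smooth and supported in $[N/2,2N]$, so that $\sum_n \chi(n) n^{-1/2} V(n/\sqrt q)=\sum_N N^{-1/2}\sum_n \chi(n) V_N(n)$. The required derivative bounds $V_N^{(j)}\ll_j N^{-j}$ follow from the Leibniz rule together with the decay of $V$: the only point worth noting is that $\frac{d^j}{dx^j}V(x/\sqrt q)=q^{-j/2}V^{(j)}(x/\sqrt q)\ll_j q^{-j/2}(x/\sqrt q)^{-j}(1+x/\sqrt q)^{-A}\ll x^{-j}\asymp N^{-j}$ on the support $x\asymp N$, the factor $q^{-j/2}$ being absorbed; the surviving $(1+x/\sqrt q)^{-A}$ is harmless here and will be used for truncation.

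Finally I would truncate the dyadic sum at $N\leqslant q^{1/2+\varepsilon}$: for $N>q^{1/2+\varepsilon}$ the rapid decay of $V$ bounds $N^{-1/2}\sum_n\chi(n)V_N(n)\ll N^{1/2}(N/\sqrt q)^{-A}$, which, summed geometrically over such dyadic $N$, is $\ll q^{-100}$ once the exponent $A=A(\varepsilon)$ is taken large enough. Then Cauchy--Schwarz over the remaining $O(\log q)$ dyadic scales gives
\[
    \bigg|\sum_n \frac{\chi(n)}{\sqrt n} V\!\left(\frac{n}{\sqrt q}\right)\bigg|^2 \ll \log q\sum_{\substack{N\leqslant q^{1/2+\varepsilon}\\ N\text{ dyadic}}}\bigg|\frac{1}{\sqrt N}\sum_n \chi(n) V_N(n)\bigg|^2 + q^{-100},
\]
and combining with the first paragraph yields the lemma. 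There is no genuine obstacle: this is the standard approximate-functional-equation manipulation, and the only mildly delicate steps are the bookkeeping with the rapidly decaying cutoff (choosing $A$ for the $q^{-100}$ tail and verifying the uniform derivative bounds) and the cosmetic removal of the cutoff's dependence on the parity of $\chi$.
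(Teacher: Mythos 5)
Your proposal is correct and follows essentially the same route the authors indicate: apply \cite[Theorem~5.3]{IwaniecKowalski2004} with the balance point at $\sqrt q$, use that the cutoff is real so the dual sum is a conjugate (giving $|L|\leqslant 2|\sum\chi(n)n^{-1/2}V(n/\sqrt q)|$), insert a dyadic partition, truncate the tail $N>q^{1/2+\varepsilon}$ via the rapid decay of $V$, and apply Cauchy--Schwarz over the $O(\log q)$ remaining scales. The bookkeeping (absorbing the $\sqrt{N/x}$, the parity dependence of $V$, and the derivative bound $V_N^{(j)}\ll_j N^{-j}$ via the contour-shift estimate $V^{(j)}(y)\ll y^{-j}$ for $j\geqslant 1$) is all sound.
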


\subsection{\texorpdfstring{$p$-adically}{p-adically} analytic phases}
Among the key features of our treatment of exponential sums will be: \textit{(i)} the consistent interpretation of oscillating terms (such as characters) as exponentials with phases that are $p$-adically analytic functions and \textit{(ii)} the analysis thereof. For a rigorous treatment of these concepts, we refer to \cite[\S 2]{Milicevic2016}. Recall that a $p$-adically analytic function $f$ on a domain $D\subseteq\mathbb{Z}_p$ is locally expressible, around each point $a\in D$, in a $p$-adic ball of the form $\{x\in\mathbb{Z}_p:|x-a|_p\leqslant p^{-\varrho}\}\subseteq D$ ($\varrho\in\mathbb{Z}_{\geqslant 0}$) as the sum of its $p$-adically convergent Taylor power series. We let $r_p(f;a)$ denote the largest such $p^{-\varrho}$ (which is not quite the same as the $p$-adic radius of convergence) and $r_p(f)=\inf_{a\in D} r_p(f;a)\geqslant 0$; in all phases we will encounter, $r_p(f)\geqslant p^{-1}$ will hold. It is not hard to see that $r_p(f';a)\geqslant r_p(f;a)$.

We will make extensive use of the
$p$-adic logarithm, which for simplicity we define on $1+p\mathbb{Z}_p$. Recall that, throughout the paper, $p$ is an odd prime.
\begin{defn}
The $p$-adic logarithm, $\log_p : 1+p\Z_p \to p\Z_p$ is the analytic function given as
\[
\log_p (1 + x) := \sum_{k \geqslant 1} (-1)^{k-1} \frac{x^k}{k}.
\]
\end{defn}

Access to the above is critical due to the following lemma, with roots in Postnikov \cite{Postnikov1955} and which we quote from \cite[Lemma~13]{Milicevic2016}.
\begin{lem} 
\label{Postnikov}
Let $\chi$ be a primitive character modulo $p^n$. Then there exists a $p$-adic unit $A$ such that, for every $p$-adic integer $k$,
\begin{equation}
\label{Postnikov formula}
\chi(1 + kp) = e \left( \frac{A \log_p (1+kp)}{p^n} \right).
\end{equation}
\end{lem}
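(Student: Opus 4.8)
The plan is to realize $\chi$, restricted to the pro-$p$ subgroup $1+p\mathbb{Z}_p\subseteq(\mathbb{Z}/p^n\mathbb{Z})^{\times}$, as a character of the additive group $p\mathbb{Z}_p$ transported across the $p$-adic logarithm. The one non-formal input I would invoke is the standard fact (valid for odd $p$; see \cite[\S 2]{Milicevic2016}) that $\log_p\colon 1+p\mathbb{Z}_p\to p\mathbb{Z}_p$ is an isomorphism of topological groups, with inverse $\exp_p$, and that it is \emph{filtration-preserving}: it restricts to isomorphisms $1+p^j\mathbb{Z}_p\to p^j\mathbb{Z}_p$ for every $j\geqslant 1$. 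This follows from the termwise valuation bound $v_p\bigl((p^jx)^k/k\bigr)\geqslant jk-v_p(k)>j$ for $k\geqslant 2$ and $p$ odd, which simultaneously gives $p$-adic convergence on $p^j\mathbb{Z}_p$ and shows $\log_p(1+p^jx)\equiv p^jx\Mod{p^{j+1}}$; the corresponding estimate for $\exp_p$ (with $k!$ in place of $k$) provides the inverse.

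First I would dispose of the trivial case $n=1$, where both sides equal $1$ (on the left because $1+kp\equiv 1\Mod{p}$, on the right because $\log_p(1+kp)\in p\mathbb{Z}_p$), so any unit $A$ works; assume henceforth $n\geqslant 2$. Since $\chi$ has modulus $p^n$, its restriction to $1+p\mathbb{Z}_p$ is a continuous homomorphism into $\mathbb{C}^{\times}$ that is trivial on $1+p^n\mathbb{Z}_p$; composing with $\exp_p$ produces a continuous homomorphism $\phi\colon p\mathbb{Z}_p\to\mathbb{C}^{\times}$, trivial on $p^n\mathbb{Z}_p$, satisfying $\chi(1+kp)=\phi\bigl(\log_p(1+kp)\bigr)$ for all $k\in\mathbb{Z}_p$. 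Now $\phi$ factors through the cyclic group $p\mathbb{Z}_p/p^n\mathbb{Z}_p$ of order $p^{n-1}$, generated by $p$; hence $\phi(p)$ is a $p^{n-1}$-th root of unity, say $\phi(p)=e(A/p^{n-1})$ with $A\in\mathbb{Z}$, and by continuity together with the density of $\mathbb{Z}$ in $\mathbb{Z}_p$ one gets $\phi(pm)=\phi(p)^m=e(Am/p^{n-1})$ for every $m\in\mathbb{Z}_p$, that is, $\phi(y)=e(Ay/p^n)$ for all $y\in p\mathbb{Z}_p$. Substituting $y=\log_p(1+kp)$ yields the claimed identity.

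Finally, I would read off the unit condition on $A$ from primitivity. By definition $\chi$ is primitive modulo $p^n$ precisely when it is nontrivial on the reduction kernel $\ker\bigl((\mathbb{Z}/p^n\mathbb{Z})^{\times}\to(\mathbb{Z}/p^{n-1}\mathbb{Z})^{\times}\bigr)$, which corresponds under $\log_p$ to $p^{n-1}\mathbb{Z}_p$; since $\phi(p^{n-1}m)=e(Am/p)$, nontriviality there is equivalent to $p\nmid A$, i.e.\ $A\in\mathbb{Z}_p^{\times}$ (note $A$ is determined only modulo $p^{n-1}$, but every such representative is a unit). The only step requiring genuine care is the structural statement about $\log_p$ in the first paragraph — this is exactly where the oddness of $p$ enters, as for $p=2$ the group $1+2\mathbb{Z}_2$ is not procyclic and the statement must be modified; once that is granted, the rest is elementary character theory of a finite cyclic group.
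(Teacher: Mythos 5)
The paper does not prove this lemma: it quotes it verbatim from \cite[Lemma~13]{Milicevic2016} (with roots in Postnikov), so there is no in-paper argument to compare against. Your proof is correct and complete: the filtration-preserving isomorphism property of $\log_p$ on $1+p\mathbb{Z}_p$ for odd $p$ is exactly the right structural input, transporting $\chi$ to a character $\phi$ of the cyclic additive group $p\mathbb{Z}_p/p^n\mathbb{Z}_p$ pins down the form $\phi(y)=e(Ay/p^n)$, and identifying primitivity of $\chi$ with nontriviality of $\phi$ on $p^{n-1}\mathbb{Z}_p$ correctly yields $p\nmid A$; the handling of $n=1$ and the well-definedness of $A$ modulo $p^{n-1}$ are both addressed. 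This is the standard conceptual proof of Postnikov's formula and matches the approach of the cited source, so it fills in a quoted result rather than offering a genuinely different route.
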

Lemma~\ref{Postnikov} allows us to explicate the phase of any exponential of the form $\chi(1+kp) e(f(k)/p^n)$ when $\chi$ is a character modulo $p^n$.

It will be necessary to handle solutions to quadratic equations over $\Z_p$, which requires the use of $p$-adic square roots. For $p$ an odd prime and $x \in \Z_p^{\times \, 2}$, the congruence $u^2 \equiv x \Mod{p^{\kappa}}$ has exactly two solutions modulo every $p^{\kappa}$, which reside within two $p$-adic towers and limit to the solutions of $u^2=x$ as $\kappa \to \infty$. We denote these solutions $\pm x_{1/2}$. For $(\, \cdot \,)_{1/2} :\Z_p^{\times 2}\to\Z_p^{\times}$ to be well-defined, a choice of square root for each $y\in (\Z / p \Z)^{\times 2}$ must be made. This set of choices propagates to $\Z_p^{\times 2}$ and represents one of the $2^{(p-1)/2}$ branches of the $p$-adic square root. A thorough treatment of $p$-adic square roots can be found in \cite[\S 2]{BlomerMilicevic2015}; we content ourselves with summarizing two properties of import to us.

Each branch $x_{1/2}$ of the square root is an analytic function expressible by a convergent power series in balls of radius $r_p\geqslant p^{-1}$. Specifically, on $1+p\Z_p$, the binomial expansion
\begin{equation}
\label{sqrt-expansion}
(1 + xp)^{1/2} = \sum_{k \geqslant 0} \binom{1/2}{k} (xp)^k
\end{equation}
gives the branch with values in $1+p\Z_p$ (as seen by formally squaring the right-hand side), which is in fact an automorphism of $1+p\Z_p$. For an arbitrary $u\in\Z_p^{\times 2}$, a simple argument modulo $p$ shows that
\begin{equation}
\label{sqrt-mult}
(u + xp)_{1/2} = u_{1/2} (1 + x \overline{u}p)^{1/2}
\end{equation}
where $\overline{u}$ denotes the $p$-adic inverse of $u$. While $( \, \cdot \,)_{1/2}$ cannot in general be expected to be multiplicative, \eqref{sqrt-mult} gives it both a pseudo-morphism rule and a power expansion. Moving forward, we fix a branch to be used throughout, drop the $( \, \cdot \, )_{1/2}$ notation and simply write $(\, \cdot \,)^{1/2}$ or use a radical symbol for our chosen branch, using caution to only use \eqref{sqrt-expansion}, \eqref{sqrt-mult}, and $\sqrt{m}^2=m$
when exercising the usual archimedean exponent rules. For future reference, we note that, for all $u,u'\in\Z_p^{\times 2}$,
\begin{equation}
\label{ord-sqrt}
\ord_p(\sqrt{u}-\sqrt{u'})=\ord_p(u-u').
\end{equation}

\subsection{\texorpdfstring{$p$-adic}{p-adic} method of stationary phase}
The following pair of lemmata establishes what is known as the $p$-adic method of stationary phase (see, for example, \cite[\S 4]{Milicevic2016}, \cite[\S 7]{BlomerMilicevic2015}), allowing one to evaluate complete sums involving such exponentials. They are the proper $p$-adic analogues of the classical method of stationary phase for exponential integrals of the form $\int_{\mathbb{R}}g(x)e(f(x))\,\mathrm{d}x$ with a suitable smooth phase $f$ and weight $g$, which generically proceeds in two principal steps: \textit{(i)} showing that ranges where $|f'|$ is not suitably small are negligible, and \textit{(ii)} close to each non-degenerate stationary point $x_0$ of the phase $f$, approximating $f$ quadratically, with resulting Gaussian-type integrals evaluating to about $g(x_0)e(f(x_0))/\sqrt{|f''(x_0)|}$ (see~\cite{GrahamKolesnik1991}).

\begin{lem}
\label{stationary phase}
Let $p$ be an odd prime, $1 \leqslant \ell \leqslant n$ be integers, and $f: \Z_p^{\times} \to \Z_p$ be an analytic function invariant modulo $p^n$ under translation by $p^n \Z_p$. If $r_p(f)\geqslant p^{-\ell}$ and $p^{k \ell }f^{(k)}(x)/k! \equiv 0 \Mod{p^n}$ for all $x \in \Z_p^{\times}$ when $k \geqslant 2$, then
\[     \sumprime_{x \Mod{p^n}} e \left(   \frac{f(x)}{p^n} \right) = \sumprime_{\substack{x_0 \Mod{p^n} \\ f'(x_0) \equiv 0 \Mod{p^{n-\ell }}}} e \left( \frac{f(x_0)}{p^n} \right). \]
\end{lem}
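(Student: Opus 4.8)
The plan is to exploit the fact that the hypothesis forces the phase $f(x)/p^n$ to behave, on each residue ball where Taylor expansion is valid, exactly like a quadratic polynomial modulo $p^n$, and then to execute the two classical steps of stationary phase $p$-adically. First I would fix a scale: since $r_p(f)\geqslant p^{-\ell}$, partition $\Z_p^\times/p^n\Z_p$ into residue classes $x\equiv a\Mod{p^\ell}$ with $a\in\Z_p^\times$, so that on each such class we may write, for $x=a+p^\ell y$ with $y$ ranging modulo $p^{n-\ell}$,
\[
f(a+p^\ell y)=f(a)+p^\ell y f'(a)+\sum_{k\geqslant 2}\frac{f^{(k)}(a)}{k!}p^{k\ell}y^k.
\]
By the hypothesis $p^{k\ell}f^{(k)}(x)/k!\equiv 0\Mod{p^n}$ for $k\geqslant 2$, every term with $k\geqslant 2$ vanishes modulo $p^n$, so $f(a+p^\ell y)\equiv f(a)+p^\ell y f'(a)\Mod{p^n}$. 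Hence the inner sum over $y$ is a pure additive character sum $\sum_{y\Mod{p^{n-\ell}}}e(yf'(a)/p^{n-\ell})$, which equals $p^{n-\ell}$ when $p^{n-\ell}\mid f'(a)$ and vanishes otherwise. This is step \textit{(i)}: the classes where $|f'|$ is too large contribute nothing.

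Next I would assemble the surviving contributions. A class $x\equiv a\Mod{p^\ell}$ survives exactly when $f'(a)\equiv 0\Mod{p^{n-\ell}}$, and because $r_p(f')\geqslant r_p(f)\geqslant p^{-\ell}$ the condition $f'(a)\equiv 0\Mod{p^{n-\ell}}$ depends only on $a$ modulo $p^\ell$ (one checks $n-\ell\leqslant\ell$ is not needed here — the point is that $f'$ is constant modulo $p^{n-\ell}$ on balls of radius $p^{-\ell}$ only once $n-\ell \le \ell$; in general one argues directly that the truncated linear approximation already shows $f'(a+p^\ell y)\equiv f'(a)\Mod{p^{n-\ell}}$, using $r_p(f')\ge p^{-\ell}$ together with the same Taylor bound applied to $f'$). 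Each surviving class contributes $p^{n-\ell}e(f(a)/p^n)$, and since $f(a+p^\ell y)\equiv f(a)\Mod{p^n}$ on that entire class, these $p^{n-\ell}$ values of $x_0=a+p^\ell y$ are precisely the stationary points $x_0\Mod{p^n}$ with $f'(x_0)\equiv 0\Mod{p^{n-\ell}}$, each carrying the common value $e(f(x_0)/p^n)$. Summing $p^{n-\ell}$ copies of $e(f(a)/p^n)$ over the surviving $a$ is therefore literally the same as summing $e(f(x_0)/p^n)$ over all $x_0\Mod{p^n}$ with $f'(x_0)\equiv 0\Mod{p^{n-\ell}}$, which is the claimed right-hand side.

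The main obstacle I anticipate is the bookkeeping around the two invariance claims that make the partition well-defined: that $f$ itself is genuinely $p^n$-periodic so the outer sum over $x\Mod{p^n}$ makes sense (this is assumed), and, more delicately, that both the phase value $f(a)\Mod{p^n}$ and the derivative condition $f'(a)\equiv 0\Mod{p^{n-\ell}}$ are constant on each ball of radius $p^{-\ell}$. Both reduce to the single Taylor estimate in the hypothesis — for $f$ directly, and for $f'$ via $r_p(f')\geqslant r_p(f)$ together with the observation that the $k$-th Taylor coefficient of $f'$ is $(k+1)f^{(k+1)}/k!$, whose contribution $p^{k\ell}(k+1)f^{(k+1)}(a)/k!=(k+1)p^{-\ell}\cdot p^{(k+1)\ell}f^{(k+1)}(a)/(k+1)!$ is $\equiv 0\Mod{p^{n-\ell}}$ for $k\geqslant 1$ by the hypothesis applied at index $k+1\geqslant 2$. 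Once these invariances are in hand the proof is a direct computation with no further input needed; there is no genuine analytic difficulty, only the care required to track which congruence holds to which power of $p$.
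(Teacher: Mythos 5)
Your proof is correct and follows essentially the same stationary-phase argument as the paper's: Taylor expand on $p^\ell$-balls, annihilate the $k\geqslant 2$ terms using the hypothesis, and evaluate the resulting linear-phase geometric sum over the perturbation variable. The one place the paper is slicker is that it sidesteps your separate verification that $f(a)\Mod{p^n}$ and the condition $f'(a)\equiv 0\Mod{p^{n-\ell}}$ are $p^\ell$-ball invariants: rather than picking one representative $a$ per ball, the paper writes the original sum as $p^{-(n-\ell)}\sumprime_{x_0\Mod{p^n}}\sum_{t\Mod{p^{n-\ell}}}e\bigl(f(x_0+tp^\ell)/p^n\bigr)$, letting $x_0$ range over \emph{all} residues modulo $p^n$ and dividing out the overcount $p^{n-\ell}$, so that once the $t$-sum collapses to $p^{n-\ell}\mathbf{1}_{f'(x_0)\equiv 0\Mod{p^{n-\ell}}}$ the right-hand side is already in the stated form with no further invariance check needed.
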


\begin{proof}
Expanding $f(x)$ around $x_0$ gives $f(x_0 + t p^{\ell}) = \sum_{k \ge 0} f^{(k)}(x_0) (t p^{\ell})^k/k!$. With this, observe
\[   \sumprime_{x \Mod{p^n}} e \left(   \frac{f(x)}{p^n} \right) = \frac{1}{p^{n-\ell}} \sumprime_{x_0 \Mod{p^n}} \sum_{t \Mod{p^{n-\ell}}} e \left( \frac{f(x_0) + f'(x_0) t  p^{\ell}}{p^n} \right) \]
where the inner sum contributes $p^{n-\ell }e(f(x_0)/p^n)$ when $f'(x_0) \equiv 0 \Mod{p^{n- \ell }}$ and vanishes otherwise.
\end{proof}

Lemma~\ref{stationary phase} reduces a complete exponential sum to $p$-adic neighborhoods in which $|f'(x)|_p$ is small. The following lemma is a further refined statement that explicitly evaluates these localized sums and is suited for exponential sums that we will encounter in the proof of Lemma~\ref{Kchi-lemma}.

\begin{lem}
\label{exact stationary phase}
Let $p$ be an odd prime, $n\geqslant 2$, and $f: \Z_p^{\times} \to \Z_p$ be an analytic function satisfying the hypotheses in Lemma~\ref{stationary phase} for $\ell = \lceil n/2 \rceil$. Let $X\subseteq(\mathbb{Z}/p^n\mathbb{Z})^{\times}$ denote the solution set of $f'(x_0)\equiv 0\pmod{p^{\lfloor n/2\rfloor}}$, and assume that, for all $x_0\in X$, $r_p(f;x_0)\geqslant p^{-\lfloor n/2 \rfloor}$, $f''(x_0)\in\mathbb{Z}_p$, and $p^{ \lfloor n/2 \rfloor k} f^{(k)}(x_0)/k! \equiv 0 \Mod{p^n}$ for $k \geqslant 3$. Then, $X$ is invariant under translation by $p^{\lfloor n/2\rfloor}\mathbb{Z}$, and, for an arbitrary set of representatives $\tilde{X}$ for $X/p^{\lfloor n/2\rfloor}\mathbb{Z}$,
\[    \sumprime_{x \Mod{p^n}} e \left( \frac{f(x)}{p^n} \right) = p^{n/2}\sum_{x_0 \in\tilde{X}}  e \left( \frac{f(x_0) }{p^n} \right) \Delta_f (x_0;p^n), \]
where all summands are independent of the choice of $\tilde{X}$, and, writing $f'(x_0)_{\circ}:=f'(x_0)/p^{\lfloor n/2\rfloor}$ and $\big( \tfrac{\cdot}{p} \big)$ for the Legendre symbol,
\[ 
\Delta_f(x_0;p^n) =
     \begin{cases}
       1, &  2\mid n;\\
       \epsilon(p) \big( \frac{2f''(x_0)}{p} \big)e \left( \frac{-\overline{2f''(x_0)}f'(x_0)_{\circ}^2}{p} \right), & 2\nmid n,p\nmid f''(x_0);\\
	\sqrt{p}\mathbf{1}_{p\mid f'(x_0)_{\circ}}, &2\nmid n,p\mid f''(x_0),
     \end{cases}
\quad
\epsilon(p)=\begin{cases}
1, & p \equiv 1 \Mod{4};\\
i, & p \equiv 3 \Mod{4}.
\end{cases}
\]
\end{lem}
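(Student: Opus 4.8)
The plan is to bootstrap from Lemma~\ref{stationary phase}. Applying that lemma (applicable by hypothesis with $\ell=\lceil n/2\rceil$), so that $n-\ell=\lfloor n/2\rfloor$, collapses the complete sum to the restricted sum over the entire stationary set:
\[
\sumprime_{x\Mod{p^n}}e\left(\frac{f(x)}{p^n}\right)=\sum_{x_0\in X}e\left(\frac{f(x_0)}{p^n}\right).
\]
Everything then reduces to a local analysis of $f$ near each stationary point $x_0$ at the scale $p^{\lfloor n/2\rfloor}$.

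First I would verify that $X$ is invariant under translation by $p^{\lfloor n/2\rfloor}\Z$. Since $r_p(f;x_0)\geqslant p^{-\lfloor n/2\rfloor}$, the function $f'$ agrees on the ball $x_0+p^{\lfloor n/2\rfloor}\Z_p$ with its Taylor series about $x_0$; the linear term $f''(x_0)sp^{\lfloor n/2\rfloor}$ lies in $p^{\lfloor n/2\rfloor}\Z_p$ because $f''(x_0)\in\Z_p$, and the higher terms are yet more divisible by virtue of the hypothesis $p^{\lfloor n/2\rfloor k}f^{(k)}(x_0)/k!\equiv 0\Mod{p^n}$ for $k\geqslant 3$. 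Hence $f'(x_0+sp^{\lfloor n/2\rfloor})\equiv f'(x_0)\Mod{p^{\lfloor n/2\rfloor}}$, so $X$ is a union of cosets of $p^{\lfloor n/2\rfloor}\Z/p^n\Z$ and decomposes into $|\tilde X|$ blocks of $p^{\lceil n/2\rceil}$ points each, giving
\[
\sum_{x_0\in X}e\left(\frac{f(x_0)}{p^n}\right)=\sum_{x_0\in\tilde X}\ \sum_{s\Mod{p^{\lceil n/2\rceil}}}e\left(\frac{f(x_0+sp^{\lfloor n/2\rfloor})}{p^n}\right).
\]

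Next I would evaluate each inner block sum. Expanding $f$ about $x_0$, writing $f'(x_0)=p^{\lfloor n/2\rfloor}f'(x_0)_{\circ}$, and discarding the $k\geqslant 3$ Taylor terms, which are $\equiv 0\Mod{p^n}$ by hypothesis, leaves
\[
f(x_0+sp^{\lfloor n/2\rfloor})\equiv f(x_0)+\big(f'(x_0)_{\circ}s+\tfrac12 f''(x_0)s^2\big)p^{2\lfloor n/2\rfloor}\Mod{p^n}.
\]
If $n$ is even then $2\lfloor n/2\rfloor=n$, so the entire perturbation vanishes modulo $p^n$ and the block sum is $p^{n/2}e(f(x_0)/p^n)$, giving $\Delta_f=1$. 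If $n$ is odd then $2\lfloor n/2\rfloor=n-1$ and the summand depends only on $s\Mod p$, so the block sum equals $p^{\lfloor n/2\rfloor}e(f(x_0)/p^n)$ times the quadratic exponential sum $\sum_{s\Mod p}e\big((\tfrac12 f''(x_0)s^2+f'(x_0)_{\circ}s)/p\big)$; completing the square when $p\nmid f''(x_0)$ and invoking the classical evaluation $\sum_{t\Mod p}e(ct^2/p)=\epsilon(p)\big(\tfrac{c}{p}\big)\sqrt p$ of the quadratic Gauss sum yields the second case, while for $p\mid f''(x_0)$ this sum degenerates to the linear character sum $p\cdot\mathbf{1}_{p\mid f'(x_0)_{\circ}}$, yielding the third. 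Since $p^{\lfloor n/2\rfloor}\sqrt p=p^{n/2}$ and each block sum is intrinsically a sum over a coset of $p^{\lfloor n/2\rfloor}\Z/p^n\Z$, both the stated formula and the asserted independence of the choice of $\tilde X$ follow.

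The reduction through Lemma~\ref{stationary phase} and the Gauss sum evaluation are routine; the part requiring care is the $p$-adic bookkeeping — tracking the exact power of $p$ carried by each Taylor coefficient and confirming that precisely the stated terms survive modulo $p^n$ — together with the recognition that the hypotheses $f''(x_0)\in\Z_p$ and $r_p(f;x_0)\geqslant p^{-\lfloor n/2\rfloor}$ are exactly what legitimizes the local quadratic expansion, and that the split on whether $p\mid f''(x_0)$ is forced, since otherwise the leading coefficient of the Gauss sum is non-invertible. I do not expect any obstacle beyond keeping this bookkeeping honest.
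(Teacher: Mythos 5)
Your proof is correct and follows essentially the same route as the paper's: apply Lemma~\ref{stationary phase} with $\ell=\lceil n/2\rceil$ to collapse the sum to the stationary set $X$, decompose $X$ into cosets of $p^{\lfloor n/2\rfloor}\mathbb{Z}$, Taylor-expand $f$ to quadratic order within each block (the cubic and higher terms dying modulo $p^n$ by hypothesis), and then either observe the perturbation vanishes ($n$ even) or reduce to a length-$p$ Gauss/linear sum ($n$ odd). Your explicit verification of the translational invariance of $X$ via the expansion of $f'$ spells out what the paper dispatches as ``clear from our hypotheses,'' and is a welcome addition; the remaining bookkeeping ($p^{\lfloor n/2\rfloor}\sqrt p=p^{n/2}$ in the odd cases, $\left(\frac{\bar2 f''(x_0)}{p}\right)=\left(\frac{2f''(x_0)}{p}\right)$) is handled correctly.
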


\begin{proof}
The translational invariance of $X$ modulo $p^{\lfloor n/2\rfloor}\mathbb{Z}$ is clear from our hypotheses and the expansion of $f'(x_0+tp^{\lfloor n/2\rfloor})$ at each $x_0\in X$. Application of Lemma~\ref{stationary phase} with $\ell = \lceil n/2 \rceil$ together with an expansion of $f$ around each $x_0 \in \tilde{X}$ gives
\begin{align*}
    \sumprime_{x \Mod{p^n}} e \left( \frac{f(x)}{p^n} \right) & = \sum_{x_0 \in \tilde{X}} \sum_{t \Mod{p^{\lceil n/2 \rceil}}} e \left( \frac{f(x_0 + t p^{\lfloor n/2 \rfloor})}{p^n} \right) \\
    &= p^{\lfloor n/2 \rfloor} \sum_{x_0 \in \tilde{X}} e \left( \frac{f(x_0)}{p^n} \right) \sum_{t \Mod{p^{ n - 2 \lfloor n/2 \rfloor }}} e \left( \frac{f'(x_0)_{\circ} t + \bar{2}f''(x_0) t^2}{p^{n - 2 \lfloor n/2 \rfloor}} \right).
\end{align*}
For $n$ even, the inner sum is trivial and the desired result follows. If $n$ is odd, the contribution from $p\mid f''(x_0)$ is clear, while, for $p\nmid f''(x_0)$, completing the square yields for the inner sum
\[ e\bigg(\frac{-\overline{2f''(x_0)}f'(x_0)_{\circ}^2}{p} \bigg) \sum_{t \Mod{p}} e \left( \frac{\bar{2}f''(x_0)t^2}{p}  \right)
=\epsilon(p) \sqrt{p} \left( \frac{\bar{2}f''(x_0)}{p} \right)e\bigg(\frac{-\overline{2f''(x_0)}f'(x_0)_{\circ}^2}{p} \bigg), \]
by the classical evaluation of the quadratic Gauss sum. This finishes the proof.
\end{proof}

\begin{rmk}
\label{simplifies}
The general (if somewhat cumbersome) conditions in
Lemma \ref{exact stationary phase} are easily satisfied, say, for every analytic function $f: \Z_p \to \Z_p$ with $r_p(f)\geqslant 1/p$ and $f^{(j)}(\mathbb{Z}_p)\subseteq\mathbb{Z}_p$ for all $j\geqslant 0$. The same is true for Lemma~\ref{stationary phase} with $\ell\geqslant n/2$.

In Lemma~\ref{exact stationary phase}, in the odd nonsingular case $2\nmid n$, $p\nmid f''(x_0)$, we see that $f'(x_0+tp^{\lfloor n/2\rfloor})\equiv 0\pmod {p^{\lceil n/2\rceil}}$ for exactly one $t\pmod p$; picking such a representative $\tilde{x}_0:=x_0+tp^{\lfloor n/2\rfloor}\in\tilde{X}$, we have more simply $\Delta_f(\tilde{x}_0;p^n)=\epsilon(p)\big(\frac{2f''(\tilde{x}_0)}{p}\big)$.
\end{rmk}

\begin{rmk}
\label{invariance}
Versions of Lemmata~\ref{stationary phase} and \ref{exact stationary phase} exist for sums over other subsets of residue classes $x \Mod{p^n}$, where the phase $f$ may have as domain a finite union of translates of $p^{\lambda}\Z_p$, with $\lambda\leqslant\ell$ in Lemma~\ref{stationary phase} and $\lambda\leqslant n/2$ in Lemma~\ref{exact stationary phase}.
The proofs of these parallel statements are the same; indeed, they only require that the sum be over a set of residues invariant under translation by a suitable $p^{\ell}\Z_p$ with $\ell\geqslant\lambda$ and $r_p(f)\geqslant p^{-\ell}$.
Specifically, the proof of Proposition~\ref{bound for completion} will require Lemma~\ref{stationary phase} to be applied over quadratic residues and non-residues modulo $p^n$. Lemmata~\ref{stationary phase} and \ref{exact stationary phase} also hold for sums of the form $\sum g(x) e(f(x)/p^n)$ where $g$ is invariant under translation by $p^{\ell} \Z_p$.
\end{rmk}

In practice, we will apply Lemmata~\ref{stationary phase} and \ref{exact stationary phase} in situations where explicitly writing the exponent of $q=p^n$ gets notationally cumbersome. To represent what are essentially square roots in these cases, we define
\[ \rt_*(p^n) := p^{\lfloor n/2 \rfloor} \quad \text{and} \quad \rt^*(p^n) := p^{\lceil n/2 \rceil}. \]

\subsection{Completion}
While Lemmata~\ref{stationary phase} and \ref{exact stationary phase} provide powerful tools for evaluating the types of complete exponential sums that will be found throughout, we will eventually encounter those which are incomplete. In anticipation of this, we introduce the next lemma which prepackages a technique known as completion. 

\begin{lem}
\label{completion}
Suppose $f$ is an arithmetic function with period $Q$. Then
\[ \sum_{m \leqslant M} f(m) \ll \frac{1}{Q} \sum_{v \Mod{Q}} \big|\widehat{f}(v)\big| \cdot \min \left\{ M, \| v/Q \|^{-1} \right\},\quad
\widehat{f}(v):= \sum_{u \Mod{Q}} f(u) e \left( \frac{-uv}{Q} \right), \]
where $\|x\|$ is the distance from $x$ to the nearest integer.
\end{lem}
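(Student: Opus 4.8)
The plan is to expand the Fourier inversion formula for the periodic function $f$ and exchange the order of summation, reducing the incomplete sum $\sum_{m \leqslant M} f(m)$ to a complete sum over the dual variable $v \Mod Q$ weighted by a geometric-type sum in $m$, which is then estimated by the standard bound for truncated geometric series.

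\emph{First} I would write Fourier inversion. Since $f$ has period $Q$, for every integer $m$ we have
\[ f(m) = \frac{1}{Q} \sum_{v \Mod{Q}} \widehat{f}(v) e\left( \frac{mv}{Q} \right), \]
which follows by substituting the definition of $\widehat{f}(v)$ and using orthogonality of additive characters modulo $Q$. \emph{Next}, summing over $m \leqslant M$ (say $1 \leqslant m \leqslant M$, the range being immaterial up to the implied constant since $f$ is $Q$-periodic) and exchanging the two finite sums gives
\[ \sum_{m \leqslant M} f(m) = \frac{1}{Q} \sum_{v \Mod{Q}} \widehat{f}(v) \sum_{m \leqslant M} e\left( \frac{mv}{Q} \right). \]
\emph{Then} I would bound the inner sum: trivially it is at most $M$ in absolute value, while summing the geometric progression and using $|1 - e(\theta)| = 2|\sin \pi \theta| \gg \|\theta\|$ yields
\[ \Bigl| \sum_{m \leqslant M} e\left( \tfrac{mv}{Q} \right) \Bigr| \leqslant \frac{1}{|1 - e(v/Q)|} \ll \|v/Q\|^{-1} \]
whenever $v \not\equiv 0 \Mod Q$ (and the $v \equiv 0$ term is handled by the trivial bound $M$). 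Combining the two estimates gives $\min\{M, \|v/Q\|^{-1}\}$ for each $v$, and applying the triangle inequality over $v$ produces exactly the claimed bound.

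\emph{There is no serious obstacle here}: this is the classical completion technique, and every step is routine. The only points requiring a modicum of care are the precise constant in the inequality $|1 - e(\theta)|^{-1} \ll \|\theta\|^{-1}$ (which is elementary) and the observation that the precise endpoints of the range $m \leqslant M$ do not matter because any interval of integers can be split into at most $O(1 + M/Q)$ full periods plus one partial interval, and full periods contribute $\widehat{f}(0)/Q$ times a bounded count — though in fact the cleanest route is simply to note that the stated inequality is an upper bound for $|\sum_{m \leqslant M} f(m)|$ for the interval $1 \leqslant m \leqslant M$ directly, with no splitting needed, since the inner geometric sum is itself bounded by $\min\{M, \|v/Q\|^{-1}\}$ for \emph{every} $v \Mod Q$ including $v \equiv 0$. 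This finishes the proof.
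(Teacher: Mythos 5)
Your argument is correct and coincides with the paper's proof: both write $\sum_{m\leqslant M} f(m) = \frac{1}{Q}\sum_{v\Mod Q}\widehat{f}(v)\sum_{m\leqslant M}e(mv/Q)$ (the paper via the orthogonality detector for $m\equiv u\Mod Q$, you via Fourier inversion — the same computation rearranged) and then invoke the standard bound $\min\{M,\|v/Q\|^{-1}\}$ on the truncated geometric sum. The closing remarks about endpoints and splitting into periods are harmless but unnecessary, as you yourself note.
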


\begin{proof}
Splitting the sum into residue classes modulo $Q$ yields
\[    \sum_{m \leqslant M} f(m) = \sum_{u \Mod{Q}} f(u) \sum_{m \leqslant M} \frac{1}{Q} \sum_{v \Mod{Q}} e \left( \frac{(m-u)v}{Q} \right) \\
    = \frac{1}{Q} \sum_{v \Mod{Q}} \widehat{f}(v) \sum_{m \leqslant M} e \left( \frac{mv}{Q} \right). \]
The bound
\[    \sum_{m \leqslant M} e \left( \frac{mv}{Q} \right) \ll \min \{ M, \|v/Q\|^{-1} \} \]
on the sum of a geometric sequence completes the proof.
\end{proof}

\section{Short second moment}
As before and throughout, $p$ will be an odd prime and $q$ some prime power $p^n$ for $n$ a positive integer. Further consider
\begin{equation}
\label{q restrictions}
p \leqslant q_1 \leqslant q_2 < q,    
\end{equation}
where the $q_i$ are also powers of $p$. A central object to our proofs, as in \cite{Heath-Brown1978,Nunes2019},
is the short second moment. In the $q$-aspect, this will be a power moment which samples from a $q_1$-neighborhood around some fixed primitive character $\chi \Mod{q}$. This analogy is particularly natural from a $p$-adic point of view, as the (Pontrjagin) dual group of $\mathbb{Z}_p^{\ast}$ carries the natural dual topology, with respect to which these correspond to actual small neighborhoods of $\chi$. We denote
\begin{equation*}
    S_2(\chi) := \sum_{\psi_1 \Mod{q_1}} \left| L\left(\tfrac{1}{2},\chi\psi_1 \right) \right|^2.
\end{equation*}
We will eventually analyze the size of short moments on average, but first must gather information on $S_2(\chi)$ itself.

\subsection{Executing the short second moment} 
We immediately apply Lemma~\ref{AFE2} to $S_2(\chi)$. This yields
\begin{align}
    S_2(\chi) & \ll q^{\varepsilon} \sum_{\psi_1 \Mod{q_1}} \sum_{\substack{N \leqslant q^{1/2 + \varepsilon}\\N \text{ dyadic}}} \left| \frac{1}{\sqrt{N}} \sum_n \chi \psi_1(n) V_N(n)\right|^2 + q^{-99} \nonumber \\
    & \label{after AFE} \ll \frac{q^{\varepsilon}}{N} \sum_{\psi_1 \Mod{q_1}} \left| \sum_n \chi \psi_1(n) V_N(n) \right|^2 + q^{-99}
\end{align}
for some $N \leqslant q^{1/2 + \varepsilon}$ by exchanging order of summation and choosing the summand which maximizes the inner sum. Denote the sum in \eqref{after AFE} without the error term and $q^{\varepsilon}$ factor as $B(N)$. Expansion and orthogonality of characters gives
\[    B(N) \ll \frac{q_1}{N} \sum_{n \equiv n' \Mod{q_1}} \chi(n') \overline{\chi}(n) V_N(n') \overline{V_N(n)}. \]
We note the similarity of the resulting sum (the sum of squares of short $p$-adic averages, a reflection of the $\psi_1$-average via Parseval's identity) to those encountered with Weyl differencing in the context of factorable moduli (see, for example, \cite[\S 5]{Milicevic2016}), and we proceed similarly.

Recall that $V_N \ll 1$ with support contained in $[N/2,2N]$. The diagonal terms corresponding to $n=n'$ contribute $O(q_1)$ to $B(N)$. The addition of this to the remaining pairs $(n',n)$ gives
\begin{equation}
\label{after diagonal}
    B(N) \ll q_1 + \frac{q_1}{N} \Real \sum_{h \ge 1} \sum_{n \ge 1} \chi (n + h q_1) \overline{\chi}(n) V_N(n + h q_1) \overline{V_N(n)},
\end{equation}
since each $(n',n)$ appears above or can be accounted for by conjugation. Denote the inner sum in \eqref{after diagonal} as $S_{hq_1}(N;\chi)$. Since $\chi(n + h q_1) \overline{\chi}(n)$ is periodic modulo $Q_1 = q/q_1$, we may write
\begin{equation}
    \label{before transform}
    S_{hq_1}(N; \chi) = \sumprime_{r \Mod{Q_1}} \chi(r + hq_1) \overline{\chi}(r) \sum_j V_N(r + jQ_1 + hq_1) \overline{V_N(r + jQ_1)}.
\end{equation}
We will apply Poisson summation to the inner sum in $S_{h q_1}$. Examination of
\begin{align}
    \label{transform} &\int_{\R} V_N(r + xQ_1 + hq_1)\overline{V_N(r+xQ_1)} e(-jx) \dx x \\
    &\qquad= Q_1^{-1} e(rj/Q_1) \int_{\R} V_N(y+hq_1) \overline{V_N(y)} e(-jy/Q_1) \dx y \nonumber
\end{align}
shows that the Fourier transform in \eqref{transform} is
\begin{equation}
    \label{transform2}
    Q_1^{-1} e \left( \frac{rj}{Q_1} \right)  \widehat{W_{hq_1}} (j/Q_1) \quad \text{where} \quad W_{hq_1}(y) := V_N(y + hq_1) \overline{V_N(y)}.
\end{equation}
Using \eqref{after AFE} through \eqref{transform2} together with Poisson summation yields
\begin{equation}
    \label{after Poisson}
    S_2(\chi) \ll q^{\varepsilon} \bigg( q_1 + \frac{q_1}{N} \Real \sum_{h \ge 1} Q_1^{-1/2} \sum_j \widehat{W_{hq_1}}(j/Q_1) K_{\chi}(j,h;Q_1) \bigg),
\end{equation}   
where, for $\tilde{q}$ a proper divisor of $q$, we define
\begin{equation}
\label{Kchidef}
K_{\chi}(a,b;\tilde{q}) := \tilde{q}^{-1/2} \sumprime_{r \Mod{\tilde{q}}} \chi\big(r+b(q/\tilde{q})\big)\overline{\chi}(r) e(ar/ \tilde{q}).
\end{equation}
In particular, for $(m,\tilde{q})=1$, we also write $K_{\chi}(m;\tilde{q}):=K_{\chi}(1,m;\tilde{q})=K_{\chi}(m,1;\tilde{q})$, so that
\begin{equation}\label{Kchi-m}
K_{\chi}(m;\tilde{q})=\tilde{q}^{-1/2}\sumprime_{r\Mod{\tilde{q}}}\chi\big(1+(q/\tilde{q})r\big)e(m\bar{r}/\tilde{q}).
\end{equation}
This sum (which takes on the role of trace functions from the context of square-free moduli~\cite{Nunes2019}) is of central importance to our arguments. We summarize some of its important properties in Lemma~\ref{Kchi-lemma} in section~\ref{expsums-sec}, below. In this section, we will only require the elementary reduction and vanishing claim \eqref{Kchi-reduction}.

By the support of $V_N$, we may actually take $h \ll N/q_1$ in \eqref{after Poisson}. We will soon identify the range $j$ that is essential to \eqref{after Poisson}. Once this range becomes finite, we will configure our bound in a way that highlights the main object of our study.

\subsection{Establishing the bound on \texorpdfstring{$S_2(\chi)$}{S2(chi)}}
We first show that the contribution to \eqref{after Poisson} from $j=0$ may be neglected. By Lemma~\ref{Kchi-lemma} below,
\[    K_{\chi}(0,h;Q_1) = \begin{cases} Q_1^{1/2} (1-p^{-1}),
&Q_1 \mid h;\\ -Q_1^{1/2}/p, & Q_1/p\mathrel{\|}h;\\
0, & \text{otherwise}.\end{cases} \]
The contribution from $j=0$ to \eqref{after Poisson} is then
\begin{equation}
\label{j=0}
    q^{\varepsilon} q_1 Q_1^{-1/2} \Real \sum_{1 \leqslant h \ll N q_1^{-1}} N^{-1} \widehat{W_{hq_1}}(0) K_{\chi}(0,h;Q_1) \ll q^{\varepsilon} q_1 \sum_{\substack{ 1 \leqslant h \ll N q_1^{-1} \\ Q_1 \mid hp}}
    1
    \ll q^{\varepsilon}
    \frac{N}{Q_1}.
\end{equation}
Repeated use of integration by parts shows
\[ \widehat{W_{hq_1}}(y) \ll_m N \left( \frac{1}{Ny} \right)^m \]
for every positive integer $m$. From this bound, $h\ll N/q_1$, and the trivial bound on $K_{\chi}(j,h;Q_1)$, the contribution to \eqref{after Poisson} from $|j|>q^{\varepsilon}Q_1/N$ is $O(q^{-100})$. Using \eqref{after Poisson} and \eqref{j=0}, we find
\begin{equation}
\label{reduced range}
    S_2(\chi) \ll q^{\varepsilon} \bigg( q_1 + \frac{q_1}{Q_1^{1/2}} \Real \sum_{0 < h \ll N/q_1} \sum_{0 < |j| \ll q^{\varepsilon}Q_1/N} N^{-1} \widehat{W_{hq_1}}(j/Q_1) K_{\chi}(j,h;Q_1) \bigg).
\end{equation}
According to Lemma~\ref{Kchi-lemma}, noting that $(Q_1^2/p)\nmid jh\ll q^{\varepsilon}Q_1/q_1$, we may rewrite the double sum above as
\begin{align}
    &\sum_{p^{\eta} \ll q^{1/2 + \varepsilon}q_1^{-1}}\sumprime_{h' \ll N(q_1p^{\eta})^{-1}} \sumprime_{0 < |j'| < q^{\varepsilon} Q_1 (Np^{\eta})^{-1}} N^{-1} \widehat{W_{h'p^{\eta}q_1}} (j'p^{\eta}/Q_1) K_{\chi}(j'p^{\eta},h'p^{\eta};Q_1) \nonumber \\
    &\quad
    \label{factor m}
    = \sum_{p^{\eta} \ll q^{1/2 + \varepsilon}q_1^{-1}} p^{\eta/2} \sumprime_{|m|
    \ll q^{1 + \varepsilon} (q_1^2 p^{2 \eta})^{-1}}
    K_{\chi}(m
    ;Q_1/p^{\eta})\sum_{\substack{h'|j'| = m 
    \\ h' \ll N(q_1 p^{\eta})^{-1} \\ |j'| < q^{\varepsilon} Q_1 (Np^{\eta})^{-1}}} N^{-1} \widehat{W_{h' p^{\eta} q_1}}(j' p^{\eta}/ Q_1).
\end{align}
Denoting the inner sum of \eqref{factor m} as $A(m;p^{\eta})$, the above becomes
\begin{equation}
\label{K alpha}
    \sum_{p^{\eta}\ll q^{1/2+ \varepsilon}q_1^{-1}}p^{\eta/2}\sumprime_{\substack{|m| \ll q^{1 + \varepsilon} (q_1^{2}p^{2\eta})^{-1}}}  K_{\chi}(m;Q_1/p^{\eta}) A(m;p^{\eta}),
\end{equation}
where $A(m;p^{\eta})\ll m^{\varepsilon}$ by the divisor bound. The key thing is that these noisy coefficients do not depend on $\chi$, which will allow us to remove them via an application of the Cauchy--Schwarz inequality in section~\ref{moments-sec}. Combining \eqref{reduced range} through
\eqref{K alpha}
we obtain Proposition~\ref{second moment bound}.

\begin{prop}
\label{second moment bound}
Let $q_1$ and $q$ be subject to the conditions in \eqref{q restrictions}. Then there exist coefficients $A(m;p^{\eta})\ll m^{\varepsilon}$ such that, for every primitive character $\chi\Mod{q}$,
\[
S_2(\chi) \ll q^{\varepsilon} q_1 \bigg( 1 + Q_1^{-1/2} \Real \sum_{p^{\eta}\ll q^{1/2+ \varepsilon}q_1^{-1}}p^{\eta/2} \sumprime_{\substack{|m| \ll q^{1 + \varepsilon} (q_1^{2}p^{2\eta})^{-1}}}  K_{\chi}(m;Q_1/p^{\eta})
A(m;p^{\eta}) \bigg),
\]
where $K_{\chi}(m;Q_1/p^{\eta})$ are as in \eqref{Kchidef}.
\end{prop}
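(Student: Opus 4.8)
The plan is to turn $S_2(\chi)$ into a short weighted average of the complete sums $K_\chi(m;\cdot)$ in three movements: an arithmetic reduction of the $\psi_1$-average, a Poisson summation, and a clean-up of the resulting dual sum using Lemma~\ref{Kchi-lemma}.

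First I would insert $S_2(\chi)$ into Lemma~\ref{AFE2}, interchange the (logarithmically many) dyadic pieces with the $\psi_1$-sum, and retain only the scale $N\ll q^{1/2+\varepsilon}$ that maximizes the inner sum, arriving at \eqref{after AFE}. Opening the square $|\sum_n\chi\psi_1(n)V_N(n)|^2$ and summing over $\psi_1\Mod{q_1}$ collapses, by orthogonality of characters modulo $q_1$, to the congruence $n\equiv n'\Mod{q_1}$; since $V_N\ll 1$ is supported on $[N/2,2N]$, the diagonal $n=n'$ contributes $O(q_1)$, and writing the off-diagonal as a sum over shifts $h\geqslant 1$ of $S_{hq_1}(N;\chi)=\sum_n\chi(n+hq_1)\overline{\chi}(n)V_N(n+hq_1)\overline{V_N(n)}$ produces \eqref{after diagonal}, with $h\ll N/q_1$ forced by the support.

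Next, since $\chi(n+hq_1)\overline{\chi}(n)$ is periodic modulo $Q_1=q/q_1$, I would split $S_{hq_1}$ into residue classes mod $Q_1$ as in \eqref{before transform} and Poisson-summate the smooth inner sum. The transform factors as $Q_1^{-1}e(rj/Q_1)\widehat{W_{hq_1}}(j/Q_1)$ (cf. \eqref{transform2}), and re-summing $e(jr/Q_1)$ against $\chi(r+hq_1)\overline{\chi}(r)$ over $r\Mod{Q_1}$ yields exactly the complete sum $K_\chi(j,h;Q_1)$ of \eqref{Kchidef}, hence \eqref{after Poisson}. Two portions of the dual sum are then discarded: the term $j=0$, via the explicit evaluation of $K_\chi(0,h;Q_1)$ in Lemma~\ref{Kchi-lemma} (supported on $Q_1\mid hp$), which bounds its contribution by $\ll q^\varepsilon N/Q_1\ll q^\varepsilon q_1$; and the tail $|j|>q^\varepsilon Q_1/N$, which is $O(q^{-100})$ by repeated integration by parts ($\widehat{W_{hq_1}}(y)\ll_m N(Ny)^{-m}$) together with the trivial bound $K_\chi(j,h;Q_1)\ll Q_1^{1/2}$. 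This leaves the finite double sum \eqref{reduced range}.

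Finally, on \eqref{reduced range} I would apply the reduction \eqref{Kchi-reduction} of Lemma~\ref{Kchi-lemma} to strip from $(j,h)$ the common $p$-power $p^\eta$ that factors out of the sum (with $p^\eta\ll q^{1/2+\varepsilon}q_1^{-1}$ in this range, the hypothesis $(Q_1^2/p)\nmid jh$ being automatic), reducing $K_\chi(j,h;Q_1)$ to $p^{\eta/2}K_\chi(m;Q_1/p^\eta)$ with $m$ the reduced product $h'|j'|$ coprime to $Q_1/p^\eta$ --- the dependence on the pair being only through the product by the change of variables underlying the $K_\chi(m;\tilde q)$ notation of \eqref{Kchidef}. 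Grouping all $(h',j')$ with $h'|j'|=m$ into the weights $A(m;p^\eta):=\sum_{h'|j'|=m}N^{-1}\widehat{W_{h'p^\eta q_1}}(j'p^\eta/Q_1)$ gives \eqref{factor m} and \eqref{K alpha}; each weight is $\ll 1$ and the number of factorizations of $m$ is $\ll m^\varepsilon$, so the divisor bound yields $A(m;p^\eta)\ll m^\varepsilon$, and these coefficients are manifestly independent of $\chi$. Tracking the resulting ranges of $p^\eta$ and $m$ (from $h\ll N/q_1$ and $|j|\ll q^\varepsilon Q_1/N$) and assembling \eqref{reduced range}--\eqref{K alpha} gives the asserted bound. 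The one genuinely delicate point is this last bookkeeping --- matching the $p$-power reduction of Lemma~\ref{Kchi-lemma} against the admissible ranges so that the final $\eta$- and $m$-sums take exactly the advertised shape, while checking that the $j=0$ term and the far tail are comfortably absorbed into $q^\varepsilon q_1$; everything upstream is the standard approximate-functional-equation-plus-Poisson routine.
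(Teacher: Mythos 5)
Your proposal reproduces the paper's argument essentially step for step: AFE and orthogonality of characters modulo $q_1$ to reach \eqref{after diagonal}, Poisson summation modulo $Q_1$ to produce $K_\chi(j,h;Q_1)$ in \eqref{after Poisson}, discarding $j=0$ (via the explicit evaluation of $K_\chi(0,h;Q_1)$) and the far tail, and then the $p$-power reduction from Lemma~\ref{Kchi-lemma} to factor out $p^{\eta/2}$ and repackage the remaining weights as $A(m;p^\eta)$ with the divisor bound. This is the same approach as the paper's, and the bookkeeping you flag as delicate is carried out exactly as you describe.
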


\section{Exponential sum estimates}
\label{expsums-sec}

In this section, we evaluate and estimate complete exponential sums to prime power moduli. Our principal tools are the $p$-adic stationary phase method Lemmata~\ref{stationary phase} and \ref{exact stationary phase}. In Lemma~\ref{Kchi-lemma}, we consider the complete exponential sum $K_{\chi}(j,h;\tilde{q})$ introduced in \eqref{Kchidef} and show that it can be expressed in terms of explicit exponentials $K_{\chi}^{\pm}(m;\tilde{q})$ with $p$-adically analytic phases. Then, in Proposition~\ref{bound for completion}, we show square-root cancellation in complete sums of products of $K_{\chi}^{\pm}(m;\tilde{q})$ including additive twists.

\subsection{Evaluation of \texorpdfstring{$K_{\chi}(m;\tilde{q})$}{K-chi(m,q)}}

In the following lemma, we explicitly evaluate the complete exponential sum $K_{\chi}(j,h;\tilde{q})$.

\begin{lem}
\label{Kchi-lemma}
Let $\tilde{q}$ be a proper divisor of $q$ and, for every $j\in\mathbb{Z}$, let $p^{\eta_j}=(j,\tilde{q})$. Then, the sum $K_{\chi}(j,h;\tilde{q})$ defined in \eqref{Kchidef} satisfies
\begin{equation}
\label{Kchi-reduction}
K_{\chi}(j,h;\tilde{q})=\begin{cases} p^{\eta/2} K_{\chi}(jh/p^{2\eta};\tilde{q}/p^{\eta}),
&\eta=\eta_j=\eta_h, \, \tilde{q} \neq p^{\eta} ;\\
\tilde{q}^{1/2} (1-p^{-1}), & \eta_j=\eta_h =\eta_{\tilde{q}}; \\
-\tilde{q}^{1/2}/p, &\eta_j + \eta_h = 2\eta_{\tilde{q}} - 1;\\
0, & \text{otherwise}.\end{cases}
\end{equation}
Further, let $A$ be an integer such that \eqref{Postnikov formula} holds for $\chi$, and assume that $\tilde{q}\geqslant p^2$. Then, for $(m,\tilde{q})=1$,
\begin{equation}
\label{Kchi-pm}
K_{\chi}(m;\tilde{q})= K^+_{\chi}(m ; \tilde{q}) + K^-_{\chi}(m ; \tilde{q}),
\end{equation}
where
\[ K_{\chi}^{\pm}(m;\tilde{q}) = \begin{cases} \Delta_{\theta}(s_{\pm}(m/A;\tilde{q});\tilde{q}) e \left(Ag_{\pm}(m/A;\tilde{q})/\tilde{q}\right), & \big(\frac{Am}p\big)=1;\\ 0, &\text{otherwise},
\end{cases} \]
where, for $\big( \frac{m}{p} \big)=1$,
\begin{equation}
\label{def-g-spm}
\begin{aligned}
&g_{\pm}(m;\tilde{q})=(\tilde{q}/q)\log_p\big(1+(q/\tilde{q})s_{\pm}(m;\tilde{q})\big) +  m/s_{\pm}(m;\tilde{q}),\\
&s_{\pm}(m;\tilde{q})=\tfrac12\big(mq/\tilde{q}\pm\sqrt{(mq/\tilde{q})^2+4m}\big),
\end{aligned}
\end{equation}
$\theta$ is the phase associated to $K_{\chi}(m;\tilde{q})$ in \eqref{Kchi-m}, and $\Delta_{\theta}$ is as described in Lemma~\ref{exact stationary phase}.
\end{lem}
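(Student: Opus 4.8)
The plan is to attack the two parts of Lemma~\ref{Kchi-lemma} separately. For \eqref{Kchi-reduction}, I would begin with the defining sum \eqref{Kchidef}, write $d=(j,\tilde q)=p^{\eta_j}$, and analyze when the character $\chi(r+b(q/\tilde q))\overline\chi(r)$ simplifies. The product $\chi(r+h(q/\tilde q))\overline\chi(r)=\chi(1+h(q/\tilde q)\bar r)$ depends on $r$ only through the ``Postnikov'' variable $1+h(q/\tilde q)\bar r$, and its conductor is governed by $\eta_h=\ord_p(h)$ relative to $\eta_{\tilde q}=\ord_p(\tilde q)$. One splits into cases according to whether $\eta_h\geqslant\eta_{\tilde q}$ (the character part is trivial, leaving a Ramanujan-type sum $\sum^* e(jr/\tilde q)$ whose value is the classical $\tilde q^{1/2}(1-p^{-1})$, $-\tilde q^{1/2}/p$, or $0$ dichotomy depending on $\eta_j$ versus $\eta_{\tilde q}$), or $\eta_h<\eta_{\tilde q}$, in which case I would factor the common power $p^{\min(\eta_j,\eta_h)}$ out of the modulus: substituting $r\mapsto r$ on residues mod $\tilde q/p^\eta$ and summing the leftover geometric progression forces $\eta_j=\eta_h$ for non-vanishing and produces the clean reduction $p^{\eta/2}K_\chi(jh/p^{2\eta};\tilde q/p^\eta)$. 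This part is essentially bookkeeping with Ramanujan sums and orthogonality; the only subtlety is tracking the normalizing powers of $p$ and confirming the $\tilde q\neq p^\eta$ exclusion.

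The substantive part is \eqref{Kchi-pm}. Here I would use \eqref{Kchi-m}, $K_\chi(m;\tilde q)=\tilde q^{-1/2}\sum^*_{r}\chi(1+(q/\tilde q)r)e(m\bar r/\tilde q)$, and invoke Lemma~\ref{Postnikov} to write $\chi(1+(q/\tilde q)r)=e\big(A\log_p(1+(q/\tilde q)r)/q\big)$ for the $p$-adic unit $A$; after replacing $r$ by the $p$-adic variable, the whole summand becomes $e(\theta(r)/\tilde q)$ with the analytic phase
\[
\theta(r)=(\tilde q/q)A\log_p\big(1+(q/\tilde q)r\big)+m\bar r.
\]
I would then compute the derivative: $\theta'(r)=A\cdot\frac{(q/\tilde q)}{q}\cdot\frac{q/\tilde q}{1+(q/\tilde q)r}\cdot\frac{\tilde q}{q}\cdots$—more precisely, differentiating, the stationary-point equation $\theta'(r)\equiv 0$ is (after clearing units) a quadratic in $r$, namely $(q/\tilde q)r^2+\text{(linear)}-Am\equiv 0$, whose two $p$-adic roots are exactly $s_\pm(m/A;\tilde q)$ as defined in \eqref{def-g-spm} via the $p$-adic square root from \eqref{sqrt-expansion}–\eqref{ord-sqrt}. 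The existence of these roots requires the discriminant to be a square mod $p$, which is precisely the Legendre-symbol condition $\big(\frac{Am}{p}\big)=1$; when it fails there is no stationary point and the sum vanishes to the relevant power of $p$, giving $K_\chi^\pm=0$. Applying Lemma~\ref{exact stationary phase} (with $n=\ord_p\tilde q$, $\ell=\lceil n/2\rceil$) to each root separately then evaluates the localized sum as $p^{n/2}\Delta_\theta(s_\pm;\tilde q)e(\theta(s_\pm)/\tilde q)$, and one checks that $\theta(s_\pm)/\tilde q = Ag_\pm(m/A;\tilde q)/\tilde q$ by substituting the root back into $\theta$ and simplifying the logarithm term.

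The main obstacle will be verifying the hypotheses of Lemma~\ref{exact stationary phase} for the phase $\theta$: one must confirm $r_p(\theta)\geqslant p^{-\lfloor n/2\rfloor}$ at each stationary point, that the higher Taylor coefficients $p^{\lfloor n/2\rfloor k}\theta^{(k)}(s_\pm)/k!$ vanish mod $\tilde q$ for $k\geqslant 3$, and that $\theta''(s_\pm)\in\mathbb Z_p$—all of which come down to controlling the $p$-adic sizes of the derivatives of $\log_p(1+(q/\tilde q)r)$ and of $m\bar r$, using that $q/\tilde q$ is a positive power of $p$ (so $\ord_p(q/\tilde q)\geqslant 1$) and $r$ is a unit. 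A secondary technical point is handling the odd-$n$ singular case $p\mid\theta''(s_\pm)$, where $\Delta_\theta$ carries the factor $\sqrt p\,\mathbf 1_{p\mid\theta'(s_\pm)_\circ}$; one must check this degenerate stationary behavior is consistent with the branch choice for $\sqrt\cdot$ and does not produce spurious extra mass. Finally I would note that $s_+$ and $s_-$ are genuinely distinct roots in distinct residue classes (since their difference has $\ord_p$ equal to that of $\sqrt{\text{disc}}$, controlled by \eqref{ord-sqrt}), so the two contributions do not collide and the decomposition \eqref{Kchi-pm} is exactly the sum over the two stationary points.
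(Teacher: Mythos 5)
Your proposal takes essentially the same route as the paper: pass to the form \eqref{Kchi-m}, invoke Postnikov to obtain an analytic phase $\theta$, and apply the $p$-adic stationary phase machinery (Lemmata~\ref{stationary phase} and~\ref{exact stationary phase}), with $s_{\pm}$ the two Hensel-lifted roots of $\theta'=0$ and $g_{\pm}$ obtained by substituting back. A few details are worth correcting. First, your stationary-point quadratic is written with leading coefficient $q/\tilde{q}$ and constant $-Am$, but clearing denominators in $\theta'(r)=A/(1+(q/\tilde q)r)-m/r^2=0$ gives $Ar^2-m(q/\tilde q)r-m=0$: the leading coefficient is the \emph{unit} $A$, not $q/\tilde q$. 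This matters, since the reduction mod $p$ is $Ar^2\equiv m\pmod p$, which is precisely what produces the Legendre condition $(\tfrac{Am}{p})=1$ and, via Hensel's lemma, exactly two nondegenerate roots; with $q/\tilde q$ (a multiple of $p$) in front, the quadratic would be degenerate mod $p$. Second, your concern about the singular case $p\mid\theta''(s_{\pm})$ in $\Delta_\theta$ is unnecessary: since $\theta''(x_0)\equiv 2mx_0^{-3}\pmod p$ is a unit (as $m$ and $x_0$ are units), the degenerate branch of $\Delta_\theta$ never occurs here, and the paper uses exactly this observation to apply Hensel. Third, for \eqref{Kchi-reduction}, your ``summing the leftover geometric progression'' reasoning is the $p$-adic stationary phase reduction in disguise: after the substitution $r\mapsto\overline{j'r}$ and factoring out $p^\eta$, one must split $r$ into translates $r_0+tp^{\ell}$ and observe that when exactly one of $\eta_j,\eta_h$ equals $\eta$ the phase derivative $\phi'$ is a unit, so the inner $t$-sum vanishes. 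The case $\eta_j+\eta_h=2\eta_{\tilde q}-1$ with $\eta_j\neq\eta_h$ has to be handled separately since then $\tilde q/p^{\eta}=p$ and Lemma~\ref{stationary phase} does not apply; it reduces to a Ramanujan sum, as in your $\eta_h\geqslant\eta_{\tilde q}$ case.
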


\begin{proof}
We first establish \eqref{Kchi-reduction}.
Write $h\equiv h' p^{\eta_h}\pmod{\tilde{q}}$ and $j\equiv j' p^{\eta_j}\pmod{\tilde{q}}$ where $p \nmid h'j'$, and set $\eta = \min \{ \eta_j, \eta_h \}$. If $p^{\eta} \neq \tilde{q}$, then by the substitution $r \mapsto \overline{j' r}$ for the variable of summation in \eqref{Kchidef} and a reduction to a sum over residues modulo $\tilde{q}/p^{\eta}$
we have
\begin{equation}
\label{Kchi-reduced}
    K_{\chi}(j,h;\tilde{q}) = \tilde{q}^{-1/2} p^{\eta} \sumprime_{r \Mod{\tilde{q}/p^{\eta}}} \chi \left( 1 + \frac{q}{\tilde{q}/p^{\eta}}p^{\eta_h-\eta}rj'h'\right) e \left( \frac{p^{\eta_j - \eta} \overline{r}}{\tilde{q}/p^{\eta}} \right).
\end{equation}
In particular, this proves the first case of \eqref{Kchi-reduction}. The second case, when $p^{\eta}=\tilde{q}$, follows from a trivial evaluation of the definition \eqref{Kchidef}.

We now assume $\eta_j \neq \eta_h$. For $\eta_j + \eta_h = 2 \eta_{\tilde{q}} - 1$, the situation quickly boils down, directly or with an application of \eqref{Postnikov formula}, to
\[
K_{\chi}(j,h;\tilde{q}) =
\tilde{q}^{-1/2}(\tilde{q}/p)\sumprime_{r \Mod{p}} e (r/p) = - \tilde{q}^{1/2}/p.
\]
In any other event, let $\phi$ be the phase associated to \eqref{Kchi-reduced} where
\begin{equation}
\label{phase phi}
    \phi^{(k)}(x) = (-1)^{k-1} (k-1)! \left( A \left( \frac{p^{\eta_h-\eta}j'h'}{1+x j'hq/\tilde{q}} \right)^k  \left(\frac{q}{\tilde{q}/p^{\eta}}\right)^{k-1} - kp^{\eta_j - \eta} x^{-(k+1)} \right)
\end{equation}
for $k \geqslant 1$ and $x\in\mathbb{Z}_p^{\times}$ by \eqref{Postnikov formula}. From \eqref{phase phi}, it is easily seen that $\phi^{(k)}(x)\in\mathbb{Z}_p$ for $x \in \Z_p^{\times}$, and the classical bound $\ord_p(k!)\leqslant k/(p-1)$ shows that
$\rt^*(\tilde{q}/p^{\eta})^k \phi^{(k)}(x)/k! \equiv 0 \Mod{\tilde{q}/p^{\eta}}$
for all $k\geqslant 2$ and $x \in \Z_p^{\times}$. Thus $\phi$ satisfies the hypotheses of Lemma~\ref{stationary phase} with $p^{\ell}=\rt^*(\tilde{q}/p^{\eta})$. Since in this case $\rt_*(\tilde{q}/p^{\eta}) \geqslant p$ and exactly one of $\eta_h$ and $\eta_j$ equals $\eta$, we find that $K_{\chi}(j,h;\tilde{q})$ must vanish since no solutions to $\phi'(x) \equiv 0 \Mod{p}$ exist in $(\Z/p\Z)^{\times}$. This completes the proof of \eqref{Kchi-reduction}.

Next, for $A$, $(m,\tilde{q})=1$, and $\tilde{q} \geqslant p^2$ as stated,
the phase $\theta$ associated to $K_{\chi}(m;\tilde{q})$ in \eqref{Kchi-m} agrees with the phase $\phi$ in \eqref{Kchi-reduced} and \eqref{phase phi} with $h=1$, $j=j'=m$, and $\eta_j=\eta_h=\eta=0$  (substituting $r\mapsto\overline{m}r$); in particular,
\[ \theta'(x) = \frac{A}{1+xq/\tilde{q}} - \frac{m}{x^2} \quad \text{and} \quad \theta''(x) = -\frac{Aq/\tilde{q}}{(1+xq/\tilde{q})^2} + \frac{2m}{x^3}. \]
We will use Lemma~\ref{exact stationary phase}.
If $\big(\frac{Am}p\big)=-1$, there are no solutions to $\theta'(x_0) \equiv 0 \Mod{\rt_*(\tilde{q})}$ by an obstruction modulo $p$, so that $K_{\chi}(m;\tilde{q})=0$ in this case. Otherwise, solving the equation $\theta'(x_0)=0$ yields $x_0 = s_{\pm}(m/A;\tilde{q})$. Upon noting that $\theta''(x_0)\equiv 2mx_0^{-3} \not \equiv 0 \Mod{p}$, an application of Hensel's lemma gives exactly two unique solutions to the congruence above, proving \eqref{Kchi-pm}.
\end{proof}

\subsection{Sums of products}
As we input Proposition~\ref{second moment bound} into estimating short second moments in aggregate over sets of characters, we will incur incomplete sums of products of trace functions $K_{\chi}(m;\tilde{q})$ evaluated in Lemma~\ref{Kchi-lemma}, with two different characters $\chi$. Specifically, the inner sum in \eqref{Tbound} will be estimated using the method of completion, Lemma~\ref{completion}. In preparation for this, in this section we prove the following proposition.

\begin{prop}
\label{bound for completion}
Let $\tilde{q}\geqslant p^2$ be a proper divisor of $q$ and $K_{\chi}(m;\tilde{q})$ be as in \eqref{Kchidef}. Further, let $\chi$ and $\chi'$ be two primitive Dirichlet characters modulo $q$ with associated units $A$ and $A'$ as in \eqref{Postnikov formula}. Denote $\delta_q(\chi,\chi') = (q/p,A-A')$ and $Q=\tilde{q}/(\tilde{q},\delta_q(\chi,\chi'))$. Then:
\begin{enumerate}
\item for $Q\geqslant p$, the expression $K_{\chi}^{\pm}(m;\tilde{q})\overline{K_{\chi'}^{\pm}(m;\tilde{q})}$ is $Q$-periodic and satisfies, for every $v\in\mathbb{Z}$,
\begin{equation}
\label{completion squareroot}
\sumprime_{u \Mod{Q}} K^{\pm}_{\chi} (u;\tilde{q}) \overline{K^{\pm}_{\chi'} (u;\tilde{q})} e \left( \frac{-uv}{Q} \right) \ll Q^{1/2};
\end{equation}
\item for $Q\geqslant p^2$, the left-hand side of \eqref{completion squareroot} vanishes unless $|v|_p=1$;
\item\label{Q1} for $Q=1$, $K_{\chi}^{\pm}(m;\tilde{q})\overline{K_{\chi'}^{\pm}(m;\tilde{q})}=\mathbf{1}_{(Am/p)=1}$.
\end{enumerate}
\end{prop}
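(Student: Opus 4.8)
The plan is to leverage the stationary phase evaluation of $K_\chi^\pm(m;\tilde q)$ from Lemma~\ref{Kchi-lemma} to turn the left-hand side of \eqref{completion squareroot} into a single complete exponential sum with a $p$-adically analytic phase, and then to apply the $p$-adic method of stationary phase (Lemmata~\ref{stationary phase} and \ref{exact stationary phase}) to it. First I would record the explicit shape of $K_\chi^\pm(u;\tilde q)\overline{K_{\chi'}^\pm(u;\tilde q)}$: both factors are supported on $u$ with $\big(\frac{Au}{p}\big)=1$ (respectively $\big(\frac{A'u}{p}\big)=1$), carry amplitudes $\Delta_\theta$, $\overline{\Delta_{\theta'}}$ which are bounded by $O(1)$ (in fact by $1$ away from the degenerate case, which one checks does not occur here since $p\nmid \theta''(x_0)$), and have phases $e\big(Ag_\pm(u/A;\tilde q)/\tilde q\big)\overline{e\big(A'g_\pm(u/A';\tilde q)/\tilde q\big)}$. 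Combining these and absorbing $e(-uv/Q)$ gives, after the substitution reducing modulo $Q=\tilde q/(\tilde q,\delta_q(\chi,\chi'))$, a sum of the form $\sumprime_{u\Mod{Q}} \Delta(u)\, e\big(F(u)/\tilde q\big)$ over quadratic residues, where $F$ is $p$-adically analytic on $\mathbb Z_p^\times$. The crucial structural point is that the difference of the two phase functions has a gain: since $Ag_\pm(\cdot/A;\tilde q)-A'g_\pm(\cdot/A';\tilde q)$ involves $A-A'$ and $\log_p$ differences of the stationary points $s_\pm(\cdot/A)$ and $s_\pm(\cdot/A')$, and by \eqref{ord-sqrt} those stationary points differ by something divisible by $\delta_q(\chi,\chi')/$(unit), the phase $F$ is actually divisible by $(\tilde q/Q)$, so $F/\tilde q$ is really $(\text{analytic})/Q$ plus the additive twist $-uv/Q$, i.e.\ everything lives at modulus $Q$. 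This is the heart of part~(1).

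Having reduced to $\sumprime_{u\Mod{Q}}\Delta(u)e(G(u)/Q)$ with $G$ $p$-adically analytic, $r_p(G)\geqslant p^{-1}$, and $\Delta$ locally constant of the right invariance, I would verify the hypotheses of Lemma~\ref{stationary phase} (using the classical bound $\ord_p(k!)\leqslant k/(p-1)$ to control the higher Taylor coefficients, exactly as in the proof of Lemma~\ref{Kchi-lemma}) and apply it with $\ell=\lceil\log_p Q\rceil_{\text{half}}$; by Remark~\ref{invariance} this is legitimate over the quadratic residues. The sum then collapses to the stationary points $G'(u_0)\equiv 0\Mod{\rt_*(Q)}$, and one shows there are $O(1)$ of them — this is where one computes $G'$: it has the form $(\text{phase derivative of }K_\chi)-(\text{phase derivative of }K_{\chi'})-v\cdot(\text{unit})$, and because $v$ is a $p$-adic unit (for $Q\geqslant p^2$; this is where part~(2) comes from, an obstruction mod $p$ forcing vanishing unless $|v|_p=1$) while the difference of phase-derivatives is $p$-adically small, the equation $G'(u_0)\equiv 0$ has at most one or two solutions modulo $Q$ by Hensel, with non-degenerate second derivative. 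Feeding this through Lemma~\ref{exact stationary phase} (or just Lemma~\ref{stationary phase} plus the trivial bound on the residual short sum) yields the contribution $O(Q^{1/2})$, which is \eqref{completion squareroot}. Part~(3) with $Q=1$ is immediate: then $\delta_q(\chi,\chi')=q/p$, so $A\equiv A'\Mod{q/p}$, hence $s_\pm(m/A;\tilde q)$ and $s_\pm(m/A';\tilde q)$ and their logarithms agree to sufficient precision that the phases cancel identically and $\Delta_\theta\overline{\Delta_{\theta'}}=|\Delta_\theta|^2=1$ on the support $\big(\frac{Am}{p}\big)=1$, giving the indicator.

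The main obstacle is the bookkeeping in the phase cancellation step: one must track, uniformly in the branch $\pm$ and through the pseudo-morphism identities \eqref{sqrt-expansion}, \eqref{sqrt-mult}, and \eqref{ord-sqrt}, exactly how much divisibility by $\tilde q/Q$ is gained in $Ag_\pm(u/A;\tilde q)-A'g_\pm(u/A';\tilde q)$, and confirm that it is \emph{exactly} $\tilde q/Q$ (not more, not less) so that the resulting modulus-$Q$ phase $G$ genuinely has a non-degenerate stationary point and $r_p(G)\geqslant p^{-1}$. A secondary subtlety is the interaction of the two quadratic-residue support conditions $\big(\frac{Au}{p}\big)=1$ and $\big(\frac{A'u}{p}\big)=1$: when $Q\geqslant p$ one has $p\mid \delta_q$ only if $p\mid A-A'$, in which case $\big(\frac{A}{p}\big)=\big(\frac{A'}{p}\big)$ and the two conditions coincide; when $p\nmid\delta_q$ one argues separately, and this is precisely the case feeding into the "otherwise $0$" branches. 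I would organize the proof so that the generic non-degenerate case is handled in full and the degenerate mod-$p$ cases are dispatched by the same vanishing argument used at the end of the proof of \eqref{Kchi-reduction}.
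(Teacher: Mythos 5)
Your proposal follows essentially the same route as the paper: product the explicit $K^{\pm}$-evaluations, show $Ag_{\pm}(u/A;\tilde q)-A'g_{\pm}(u/A';\tilde q)$ is divisible by $(\tilde q,\delta_q(\chi,\chi'))$ so the expression is $Q$-periodic, apply Lemma~\ref{stationary phase} (via Remark~\ref{invariance}) over quadratic residues at modulus $Q$, deduce the $|v|_p=1$ obstruction from the unit-sized leading term of $\sigma'$, and handle $Q=1$ directly. Two small adjustments: the paper does not invoke Lemma~\ref{exact stationary phase} here; it is enough to count $O(1)$ stationary points mod $\rt_*(Q)$ and bound trivially. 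More importantly, the step you flag as ``the main obstacle'' — showing the stationary congruence has $O(1)$ solutions by Hensel — requires a device you do not name: the congruence involves $\sqrt{u}$, not a polynomial in $u$, so the paper substitutes $w=(u/A)^{1/2}$ and, using the identity $\alpha^{1/2}\beta^{1/2}/(\alpha\beta)^{1/2}\in\{\pm1\}$ on $\Z_p^{\times 2}$ to resolve branch choices, reduces to one of four genuine polynomial congruences in $w$ modulo $\rt_*(Q)$, each reducing to a non-degenerate linear congruence mod $p$ and hence Hensel-liftable uniquely. Also, for part~(3), $Q=1$ only forces $\tilde q\mid A-A'$ (not $\delta_q=q/p$), which is the precision you actually need for the phases to cancel modulo $\tilde q$ and the $\Delta$-factors to coincide.
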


\begin{proof}
By Lemma~\ref{Kchi-lemma}, the sum on the left-hand side of \eqref{completion squareroot} vanishes unless $AA'\in\Z_p^{\times 2}$; we assume this henceforth and restrict the sum (as we may) to $(\frac{u/A}p)=1$. Further, let $\theta_{\chi}$ and $\theta_{\chi'}$ be phases associated to $K_{\chi}(m;\tilde{q})$ and $K_{\chi'}(m;\tilde{q})$ as in \eqref{Kchi-m}, respectively. Then, by Lemma~\ref{Kchi-lemma}, we have for $(\frac{m/A}p)=1$
\begin{align*}
& K_{\chi}^{\pm}(m;\tilde{q}) \overline{K_{\chi'}^{\pm}(m;\tilde{q})}\\
&\qquad = \Delta_{\theta_{\chi}} (s_{\pm}(m/A;\tilde{q});\tilde{q}) \overline{ \Delta_{\theta_{\chi'}} (s_{\pm}(m/A';\tilde{q});\tilde{q}) } e \left( \frac{ A g_{\pm}(m/A;\tilde{q}) - A' g_{\pm}(m/A';\tilde{q}) }{(\tilde{q},\delta_q(\chi,\chi')) Q} \right)
\end{align*}
where, for fixed $\chi$ and $\chi'$, the product of the $\Delta$ factors depends only on the class of $m \Mod{p}$, as is readily verified using $\theta_{\chi}'(x_{0,A})=0$ and $\theta_{\chi}''(x_{0,A})\equiv 2mx_{0,A}^{-3}\Mod p$ with $x_{0,A}=s_{\pm}(m/A;\tilde{q})$ from the proof of Lemma~\ref{Kchi-lemma}. This proof also shows that $g_{\pm}(m;\tilde{q})$, a function analytic on its domain $\Z_p^{\times 2}$, is invariant modulo $\tilde{q}$ under translation by $\tilde{q}\Z_p$. Moreover, a moment's reflection on the definition \eqref{def-g-spm} combined with \eqref{sqrt-mult} and \eqref{sqrt-expansion} shows that for $m\in\Z_p^{\times 2}$ both $s_{\pm}(m;\tilde{q})$ and $g_{\pm}(m;\tilde{q})$ may be expanded into a convergent power series in $\sqrt{m}$ with coefficients in $\mathbb{Z}_p$. From this and \eqref{ord-sqrt} it follows that, for $m\in A\Z_p^{\times 2}$,
\[
A g_{\pm}(m/A;\tilde{q}) - A' g_{\pm}(m/A';\tilde{q}) = (\tilde{q}, \delta_q(\chi,\chi')) \cdot \sigma(m)
\]
is divisible by $(\tilde{q}, \delta_q(\chi,\chi'))$ and invariant modulo $\tilde{q}$ under translation by $Q\Z_p$ (for $Q\geqslant p$). This establishes the periodicity claim and \eqref{Q1} follows from the definition of $\Delta_{\theta}$, noting that $A \equiv A' \Mod{p}$ in this case.

As for the estimate \eqref{completion squareroot}, the case $Q=p$ is trivial, so we assume that $Q \geqslant p^2$. We will be interested in applying Lemma~\ref{stationary phase} and Remark~\ref{invariance} with $p^n = Q$, phase $\sigma$, and $p^{\ell}=\rt^*(Q)$. Since $s_{\pm}(u/A; \tilde{q})$ solves $\theta'(x_0)=0$ in the proof of Lemma~\ref{Kchi-lemma}, we observe
\[ \frac{\dx}{\dx u} A g_{\pm} (u/A;\tilde{q}) = \left\{ \frac{\partial}{\partial u} + \frac{\partial}{\partial s_{\pm}}\cdot\frac{\dx s_{\pm}(u/A; \tilde{q})}{\dx u} \right\} A g_{\pm} (u/A;\tilde{q}) = \frac{1}{s_{\pm}(u/A ; \tilde{q})}, \]
so that, by rationalizing denominators,
\[ \sigma'(u) = \frac{\pm1/2}{\delta_q(\chi,\chi')}\left( \sqrt{ (q/\tilde{q})^2  + 4A/u } - \sqrt{ (q/\tilde{q})^2  + 4A'/u } \right) - v. \]
Expanding the difference of roots above according to \eqref{sqrt-mult} and \eqref{sqrt-expansion} yields the quantity
\[ \sum_{k \geqslant 0} \binom{1/2}{k} (q/\tilde{q})^{2k} \big( (4A/u)^{1/2} (u/4A)^k - (4A'/u)^{1/2} (u/4A')^k \big), \]
which, along with \eqref{ord-sqrt}, shows that the sum in \eqref{completion squareroot} with phase $\sigma$ satisfies the appropriate conditions in Lemma~\ref{stationary phase} (keeping in mind Remark \ref{invariance}). From here and \eqref{ord-sqrt}, we see that the sum in \eqref{completion squareroot} vanishes unless $|v|_p = 1$, as solutions to the stationary phase congruence $\sigma'(u) \equiv 0 \Mod{\rt_*(Q)}$ could not exist otherwise. Since $\alpha^{1/2} \beta^{1/2}/(\alpha \beta)^{1/2}\in\{\pm 1\}$  for every $\alpha,\beta \in \Z_p^{\times 2}$, any solutions to the stationary phase congruence must satisfy one of the four congruences
\[ \delta_{q}(\chi,\chi')^{-1} \sum_{k \geqslant 0} \binom{1/2}{k} (q/\tilde{q})^{2k} \big( \epsilon_1 \left( u/4A \right)^k -\epsilon_2 (A'/A)^{1/2} (u/4A')^k \big) \equiv v \left(  u/A \right)^{1/2} \Mod{\rt_*(Q)} \]
with $\epsilon_i\in\{\pm 1\}$, where in fact $\epsilon_1=\epsilon_2$ unless, possibly, $\delta_q(\chi,\chi') = 1$.
Each of these four congruences is polynomial in $(u/A)^{1/2}$ modulo $\rt_*(Q)$, satisfies the hypotheses of Hensel's lemma, and reduces to a non-degenerate linear congruence in $(u/A)^{1/2}$ modulo $p$. Thus there are $O(1)$ solutions modulo $\rt_*(Q)$ to the stationary phase congruence. The proposition then follows.
\end{proof}

\section{Short second moment estimates}
\label{moments-sec}
Proposition~\ref{second moment bound} provides an individual bound for the short second moment $S_2(\chi)$ in terms of averages of the arithmetic function $K_{\chi}(m;\tilde{q})$. In this section, we leverage this result and the estimates on exponential sums from section~\ref{expsums-sec} to prove in Proposition~\ref{short moment bound} our penultimate result, an aggregate bound on the short second moment over a \emph{collection} of characters modulo $q_2$.

\begin{prop}
\label{short moment bound}
Let $q_1$, $q_2$, and $q$ be subject to the conditions in \eqref{q restrictions}.
Let $\chi$ be any primitive character modulo $q$, and let $\Psi$ be any set of Dirichlet characters modulo $q_2$. Then
\[ \sum_{\psi_2 \in \Psi} S_2 (\chi \psi_2) \ll q^{\varepsilon} \left(  \big(q_1 + q_1^{1/4}q_2^{1/4} \big) |\Psi| + q^{1/2} |\Psi|^{1/2} \right). \]
\end{prop}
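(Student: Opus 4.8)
The plan is to start from Proposition~\ref{second moment bound} applied to each $S_2(\chi\psi_2)$, sum over $\psi_2\in\Psi$, and use the fact that the coefficients $A(m;p^\eta)$ do not depend on the character. Explicitly, after summing we must control
\[
\sum_{\psi_2\in\Psi}\Real\sum_{p^\eta\ll q^{1/2+\varepsilon}q_1^{-1}}p^{\eta/2}\sumprime_{|m|\ll q^{1+\varepsilon}(q_1^2p^{2\eta})^{-1}}K_{\chi\psi_2}(m;Q_1/p^\eta)A(m;p^\eta).
\]
Pulling the $\psi_2$-sum inside, splitting each $K_{\chi\psi_2}$ into $K^+_{\chi\psi_2}+K^-_{\chi\psi_2}$ via \eqref{Kchi-pm}, and applying Cauchy--Schwarz to the $m$-sum (separating the benign weight $A(m;p^\eta)$, which is $\ll m^\varepsilon\ll q^\varepsilon$, from the character-dependent part), we are reduced to bounding, for each dyadic $p^\eta$ and each sign $\pm$, a quantity of the shape
\[
\Big(\sum_{|m|\ll M}1\Big)^{1/2}\Big(\sum_{|m|\ll M}\Big|\sum_{\psi_2\in\Psi}K^{\pm}_{\chi\psi_2}(m;Q_1/p^\eta)\Big|^2\Big)^{1/2},
\qquad M=q^{1+\varepsilon}(q_1^2p^{2\eta})^{-1}.
\]
Opening the square in the second factor gives a double sum over $\psi_2,\psi_2'\in\Psi$ of $\sum_{|m|\ll M}K^{\pm}_{\chi\psi_2}(m;\tilde q)\overline{K^{\pm}_{\chi\psi_2'}(m;\tilde q)}$ with $\tilde q=Q_1/p^\eta$.

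For the inner $m$-sum I would invoke Lemma~\ref{completion}: the summand $K^{\pm}_{\chi\psi_2}(m;\tilde q)\overline{K^{\pm}_{\chi\psi_2'}(m;\tilde q)}$ is periodic of period $Q=\tilde q/(\tilde q,\delta_q(\chi\psi_2,\chi\psi_2'))$ by part~(1) of Proposition~\ref{bound for completion}, so completion expresses $\sum_{|m|\ll M}(\cdots)$ in terms of the complete sums $\widehat{(\cdots)}(v)$, which are exactly the sums of products with additive twist bounded by Proposition~\ref{bound for completion}: $\ll Q^{1/2}$ for $Q\geqslant p$, and vanishing unless $|v|_p=1$ when $Q\geqslant p^2$ (and $=\mathbf 1_{(A m/p)=1}$ contributing $\ll M$ when $Q=1$). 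Feeding these into Lemma~\ref{completion} and summing $\min\{M,\|v/Q\|^{-1}\}$ over the $v$ coprime to $p$, the off-diagonal contribution from a pair $(\psi_2,\psi_2')$ with parameter $Q$ is $\ll q^\varepsilon\big(M\,\mathbf 1_{Q=1}+Q^{1/2}\big)$ (the $\log$ losses from the $\|v/Q\|^{-1}$ sum absorbed into $q^\varepsilon$, and when $Q=1$ the sum is just $\ll M$). Now I count pairs according to the value $\delta=(\tilde q,\delta_q(\chi\psi_2,\chi\psi_2'))$: writing $\delta_q(\chi\psi_2,\chi\psi_2')=(q/p,\,A_{\psi_2}-A_{\psi_2'})$ in terms of the Postnikov units, $Q=\tilde q/\delta$ so $Q\leqslant 1$ forces $A_{\psi_2}\equiv A_{\psi_2'}$ to high $p$-adic precision, i.e. $\chi\psi_2$ and $\chi\psi_2'$ agree on $1+p\mathbb Z_p$ up to that precision; the number of $\psi_2'$ at ``distance'' giving a fixed $\delta$ is $\ll q^\varepsilon\cdot(\text{small factor})\cdot\#\{\psi_2'\text{ with }Q\mid\text{something}\}$, and crucially the total number of $\psi_2'\in\Psi$ with $Q=1$ is $O(1)$ (since the units $A_{\psi}$ for $\psi$ mod $q_2$ are distinct modulo $q_2/p$, hence there is essentially one $\psi_2'$ that collides with $\psi_2$ in this way, up to bounded multiplicity).

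Assembling, the double sum over $(\psi_2,\psi_2')$ splits into a diagonal-type part (pairs with $Q\leqslant p$, of which there are $\ll q^\varepsilon|\Psi|$ since each $\psi_2$ has $O(q^\varepsilon)$ partners) contributing $\ll q^\varepsilon|\Psi|\cdot M$, and the genuinely off-diagonal part (pairs with $Q\geqslant p^2$) contributing $\ll q^\varepsilon|\Psi|^2\cdot\max_Q Q^{1/2}\ll q^\varepsilon|\Psi|^2\tilde q^{1/2}$. Hence the second Cauchy--Schwarz factor is $\ll q^\varepsilon\big(|\Psi|M+|\Psi|^2\tilde q^{1/2}\big)^{1/2}$, and multiplying by the first factor $M^{1/2}$ and by the prefactor $q^\varepsilon q_1 Q_1^{-1/2}p^{\eta/2}$ from Proposition~\ref{second moment bound}, then using $M=q^{1+\varepsilon}(q_1^2p^{2\eta})^{-1}$, $\tilde q=Q_1/p^\eta=q/(q_1p^\eta)$, and summing the geometric series over $p^\eta$, yields
\[
\sum_{\psi_2\in\Psi}S_2(\chi\psi_2)\ll q^\varepsilon\Big(q_1|\Psi|+q^{\varepsilon}q_1\cdot\frac{M}{Q_1^{1/2}}\cdot\big(|\Psi|M\big)^{1/2}\cdot(\cdots)+q^{1/2}|\Psi|^{1/2}\Big),
\]
and a careful bookkeeping of the exponents collapses the middle term to $q^\varepsilon q_1^{1/4}q_2^{1/4}|\Psi|$ (this is where the relation between $M$, $q_1$ and the available length $Q_1^{1/2}$, together with $q_2\mid q$, is used — the worst $p^\eta$ being $p^\eta\asymp 1$) and the last to $q^{\varepsilon}q^{1/2}|\Psi|^{1/2}$, giving the claimed bound. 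The main obstacle I anticipate is precisely this last step: carrying out the exponent bookkeeping cleanly, in particular verifying that the off-diagonal term genuinely produces $q_1^{1/4}q_2^{1/4}$ (rather than something larger) uniformly over the dyadic range of $p^\eta$ and over all admissible $q_1\preccurlyeq q_2$, and correctly counting the number of character pairs $(\psi_2,\psi_2')$ with a prescribed value of $\delta_q(\chi\psi_2,\chi\psi_2')$ — i.e. understanding the metric structure on $\Psi\subseteq\widehat{(\mathbb Z/q_2\mathbb Z)^\times}$ induced by the $p$-adic distance between the associated Postnikov units, which is what makes the ``large sieve'' mechanism work.
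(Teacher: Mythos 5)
Your high-level plan is exactly the paper's: bound each $S_2(\chi\psi_2)$ by Proposition~\ref{second moment bound}, exploit that the coefficients $A(m;p^\eta)$ are $\chi$-independent, apply Cauchy--Schwarz to move to a double sum over pairs $(\psi_2,\psi_2')$, complete the $m$-sum via Lemma~\ref{completion}, and control the result using Proposition~\ref{bound for completion} together with the metric structure on $\Psi$ coming from $\delta_{q_2}(\psi_2,\psi_2')$. However, there are two genuine gaps in the execution, the first of which is fatal to the proof as written.

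\smallskip
\emph{Gap 1: the off-diagonal bound $Q^{1/2}\ll\tilde q^{1/2}$ is too crude.} You estimate the off-diagonal pairs (those with $Q\geqslant p^2$) by $\ll q^\varepsilon|\Psi|^2\max_Q Q^{1/2}\ll q^\varepsilon|\Psi|^2\tilde q^{1/2}$ with $\tilde q=Q_1/p^\eta=q/(q_1p^\eta)$. But from Proposition~\ref{bound for completion}, $Q=\tilde q/(\tilde q,(q/q_2)\delta_{q_2}(\psi_2,\psi_2'))$ (note the $q/q_2$ factor, which arises when normalizing the Postnikov units of $\psi_2,\psi_2'$ from modulus $q_2$ to modulus $q$). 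Since $\delta_{q_2}(\psi_2,\psi_2')\geqslant1$ and, in the relevant range $q_1p^\eta\leqslant q_2$, $(q/q_2)\mid\tilde q$, one always has $Q\leqslant\tilde q/(q/q_2)=q_2/(q_1p^\eta)$, not merely $Q\leqslant\tilde q$. The overestimate by the factor $(q/q_2)^{1/2}$ propagates directly: carrying your bounds through the Cauchy--Schwarz and the prefactor $q^\varepsilon q_1Q_1^{-1/2}p^{\eta/2}$ yields a middle term $\ll q^\varepsilon q^{1/4}q_1^{1/4}|\Psi|$, not the claimed $q^\varepsilon q_1^{1/4}q_2^{1/4}|\Psi|$. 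Since $q_2<q$ strictly, this is a genuine loss; the ``careful bookkeeping'' you invoke does not recover the stated exponent from your intermediate bounds. The fix is simply to use $Q^{1/2}\ll\big(q_2/(q_1p^\eta)\big)^{1/2}$ in the off-diagonal part (which is precisely what the paper does, keeping $Q$ as an explicit function of $\delta_{q_2}(\psi_2,\psi_2')$ and then using $\delta\geqslant 1$).

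\smallskip
\emph{Gap 2: the near-diagonal count is wrong (in the favorable direction).} You claim each $\psi_2$ has $O(q^\varepsilon)$ partners with $Q\leqslant p$, arguing that the Postnikov units $A_\psi$ are essentially distinct modulo $q_2/p$. But $Q\leqslant p$ does not require $A_{\psi_2}\equiv A_{\psi_2'}\pmod{q_2/p}$; it only requires $\delta_{q_2}(\psi_2,\psi_2')\geqslant q_2/(pq_1p^\eta)$, which allows on the order of $q_1p^\eta$ characters $\psi_2'$, not $O(1)$. Your count therefore underestimates the diagonal contribution. Interestingly, if one inserts the correct count $\ll q_1p^\eta$ into your computation, the diagonal does produce the term $q^{1/2}|\Psi|^{1/2}$ of the proposition (rather than the smaller $q^{1/2}|\Psi|^{1/2}q_1^{-1/2}$ your reasoning would give), so the correct argument here actually needs the larger count. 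The paper handles this by summing over dyadic values of $\delta_{q_2}(\psi_2,\psi_2')$, using that the number of pairs at a given $\delta$ is $\ll|\Psi|\min(|\Psi|,q_2/\delta)$ and noting the trivial bound only applies when $\delta\gg q_2q^{-1/2-\varepsilon}$.

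\smallskip
A further, smaller difference: the paper performs the Cauchy--Schwarz jointly over $(\eta,m)$, splitting $p^{\eta/2}=p^\eta\cdot p^{-\eta/2}$, which cleanly produces the $p^{-\eta}$ weight in the double character sum; your per-$\eta$ application, followed by a geometric series in $\eta$, gives the same outcome if the per-$\eta$ bounds are computed correctly, but this organization makes the uniformity in $\eta$ (which is where your ``worst $p^\eta\asymp 1$'' heuristic comes in) easier to verify.
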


\begin{proof}
We will use Proposition~\ref{second moment bound}; in its notation, we may assume that $Q_1/p^{\eta}\gg q^{1/2-\varepsilon}\geqslant p^2$, as Proposition~\ref{short moment bound} is trivially true for $q\ll p^4$. Decomposing $K_{\chi}(m;Q_1/p^{\eta})$ as $K_{\chi}^{+}+K_{\chi}^{-}$ as in Lemma~\ref{Kchi-lemma}, Proposition~\ref{second moment bound} gives us
\begin{equation}
\label{short second moment}
    \sum_{\psi_2 \in \Psi} S_2 (\chi \psi_2) \ll q^{\varepsilon} \left(  q_1 |\Psi| + q_1^{3/2} q^{-1/2} \big( T^+(\Psi) + T^-(\Psi) \big) \right),
\end{equation}
where   
\[
T^{\pm}(\Psi) := \Real \sum_{\psi_2 \in \Psi}\sum_{p^{\eta}\ll q^{1/2+\varepsilon}q_1^{-1}} p^{\eta/2}\sumprime_{\substack{|m| \ll q^{1 + \varepsilon} (q_1^{2} p^{2\eta})^{-1} }} K_{\chi \psi_2}^{
    \pm} (m;Q_1/p^{\eta}) A(m;p^{\eta})
\]
and $A(m;p^{\eta})\ll m^{\varepsilon}$. Application of the Cauchy--Schwarz inequality produces the bound
\begin{equation}
\label{Tbound}
    T^{\pm}(\Psi) \leqslant \frac{q^{1/2 + \varepsilon}}{q_1}
    \Bigg( \sum_{\psi_2,\psi_2' \in \Psi} 
     \sum_{p^{\eta} \ll q^{1/2 + \varepsilon} q_1^{-1}}p^{-\eta}
    \sumprime_{\substack{
    |m| \ll q^{1 + \varepsilon} (q_1^{2}p^{2\eta})^{-1} }} K_{\chi \psi_2}^{\pm}(m;Q_1/p^{\eta}) \overline{K_{\chi \psi_2'}^{\pm}(m;Q_1/p^{\eta})} \Bigg)^{1/2}.
\end{equation}
By Proposition~\ref{bound for completion}, the inner summand above is periodic modulo
\[
Q_{\eta,\psi_2,\psi_2'} = \frac{Q_1/p^{\eta}}{(Q_1/p^{\eta},(q/q_2)\delta_{q_2}(\psi_2,\psi_2'))}
\]
whenever $Q_{\eta,\psi_2,\psi_2'} \geqslant p^2$. Moreover, in this case, any complete segments of the inner sum in \eqref{Tbound} vanish, and an application of Lemma~\ref{completion} and Proposition~\ref{bound for completion} to any remaining incomplete segment gives that
\[    \sumprime_{\substack{|m| \ll q^{1 + \varepsilon} (q_1^{2}p^{2\eta})^{-1} }} K_{\chi \psi_2}^{\pm}(m;Q_1/p^{\eta}) \overline{K_{\chi \psi_2'}^{\pm}(m;Q_1/p^{\eta})} \ll q^{\varepsilon}
Q_{\eta,\psi_2,\psi_2'}^{1/2}.
\]
When $Q_{\eta,\psi_2,\psi_2'} \leqslant p$, we bound the inner sum of \eqref{Tbound} trivially. The second to inner-most sum of \eqref{Tbound} is therefore asymptotically bounded above by
\begin{align*}
    &\sum_{\substack{p^{\eta} \ll q^{1/2 + \varepsilon}q_1^{-1} \\ Q_{\eta,\psi_2,\psi_2'}\geqslant p^2}}  \frac{q^{\varepsilon} Q_{\eta,\psi_2,\psi_2'}^{1/2}}{p^{\eta}} + \sum_{\substack{p^{\eta} \ll q^{1/2 + \varepsilon}q_1^{-1} \\ Q_{\eta,\psi_2,\psi_2'} \leqslant p}} \frac{q^{1 + \varepsilon}}{q_1^2 p^{3\eta}}\\
    &\qquad\qquad \ll q^{\varepsilon} \left( \frac{q_2}{q_1
    \delta_{q_2}(\psi_2,\psi_2')} \right)^{1/2} + q^{1+\varepsilon}\min\left(\frac{q_1\delta_{q_2}(\psi_2,\psi_2')^{3}}{q_2^{3}},\frac1{q_1^2}\right),
\end{align*}
where in fact the second term only enters if $\delta_{q_2}(\psi_2,\psi_2') \gg q_2 q^{-(1/2 + \varepsilon)}$. Inserting the above into \eqref{Tbound}, we obtain
\[    T^{\pm}(\Psi) \ll \frac{q^{1/2 + \varepsilon}}{q_1} \Bigg(  \frac{q_2^{1/4}}{q_1^{1/4}} |\Psi| + \frac{q^{1/2}}{q_1^{1/2}} |\Psi|^{1/2} \Bigg). \]
Inserting this bound into \eqref{short second moment},
we complete the proof of Proposition~\ref{short moment bound}.
\end{proof}

\section{Proof of Theorem~\ref{R bound}}
\label{mainproof_sec}
Let $\varepsilon$ be an arbitrary, small positive real number. We will begin by defining the sets
\begin{align*}
    R_2&(V;\chi) := \{ \chi_2 \Mod{q_2} : \chi \chi_2 \in R(V;q) \} \\
    &\text{and} \quad \Psi(V;\chi) := \{ \psi_2 \Mod{q_2} : \psi_1 \psi_2 \in R_2(V;\chi) \text{ for some } \psi_1 \Mod{q_1}\}.
\end{align*}

Clearly we may assume that $q$ is a sufficiently high power of $p$.
Asymptotics of the fourth moment of Dirichlet $L$-functions due to Heath-Brown~\cite{Heath-Brown1981b} and later improved by Soundararajan~\cite{Soundararajan2007} imply that
\[
\sum_{\chi \Mod{q}} \left|L\left(\tfrac{1}{2},\chi\right)\right|^4 \ll q^{1 + \varepsilon},
\]
from which it follows that $|R(V;q)| \ll q^{1 + \varepsilon} V^{-4}$. This suffices to handle the case when $V \leqslant q^{1/8 + 2 \varepsilon}$. On the other hand, by the Weyl bound for Dirichlet $L$-functions to prime power moduli due to Postnikov~\cite{Postnikov1955} (see also~\cite{Milicevic2016}), $|R_2(V;\chi)|=0$ for $V\geqslant q^{1/6+\varepsilon}$.

Consider now the values of $q^{1/8+2\varepsilon}\leqslant V\leqslant q^{1/6+\varepsilon}$. We combine the observations
\begin{equation*}
    |R_2(V;\chi)| \leqslant \frac{1}{V^2 \varphi(q_1)} \sum_{\psi_2 \in \Psi(V;\chi)} S_2(\chi \psi_2) \quad \text{and} \quad |\Psi(V;\chi)| \leqslant \varphi(q_1)R_2(V;\chi),
\end{equation*}
with Proposition~\ref{short moment bound} and the choices $V^2 q^{-3\varepsilon} \leqslant q_1 \leqslant V^2q^{-2\varepsilon}$ and $q_2 = q_1^3$. With these choices, we obtain
\begin{align*}
|R_2(V;\chi)|&\ll \frac{q^{\varepsilon}}{V^2\varphi(q_1)}\Big(q_1\varphi(q_1)|R_2(V;\chi)|+q^{1/2}\varphi(q_1)^{1/2}|R_2(V;\chi)|^{1/2}\Big)\\
&\ll q^{-\varepsilon}|R_2(V;\chi)|+\frac{q^{1/2-\varepsilon}}{q_1^{3/2}}|R_2(V;\chi)|^{1/2},
\end{align*}
from which in turn it follows that
\[ |R_2(V;\chi)| \ll \frac{q^{1 + \varepsilon}}{q_1 V^4}. \]
As a consequence,
\[    |R(V;q)| = \frac1{q_2} \sumprime_{\chi\Mod q} |R_2(V;\chi)| \ll q^{2 + \varepsilon} V^{-12}, \]
which completes the proof of Theorem~\ref{R bound}, and hence of Theorem~\ref{main}.

\printbibliography

\end{document}